\documentclass[11pt,letterpaper]{article}
\usepackage[T1]{fontenc}
\usepackage{lmodern}
\usepackage[margin=0.9in]{geometry}
\usepackage{amsmath,amssymb,amsthm,mathtools}
\usepackage{enumitem,microtype}
\usepackage[hidelinks,hyperfootnotes=false]{hyperref}
\usepackage{needspace}
\newtheoremstyle{paper}{7pt}{5pt}{\normalfont}{} {\bfseries}{.}{.5em}{}
\theoremstyle{paper}
\newtheorem{theorem}{Theorem}[section]
\newtheorem{proposition}[theorem]{Proposition}
\newtheorem{lemma}[theorem]{Lemma}
\newtheorem{corollary}[theorem]{Corollary}
\newtheorem{definition}[theorem]{Definition}

\newtheorem{remark}[theorem]{Remark}
\newtheorem*{theoremA}{Theorem A}
\newtheorem*{theoremB}{Theorem B}
\newtheorem*{theoremC}{Theorem C}
\newtheorem*{theoremD}{Theorem D}
\newtheorem*{question}{Question}
\newcommand{\OO}{\mathcal O}
\newcommand{\FF}{\mathcal F}
\newcommand{\GG}{\mathcal G}
\newcommand{\II}{\mathcal I}
\newcommand{\PP}{\mathbb P}
\newcommand{\CC}{\mathbb C}
\newcommand{\QQ}{\mathbb Q}
\newcommand{\ZZ}{\mathbb Z}
\newcommand{\simQ}{\sim_{\QQ}}
\DeclareMathOperator{\Pic}{Pic}
\DeclareMathOperator{\pr}{pr}
\DeclareMathOperator{\Bl}{Bl}
\DeclareMathOperator{\rk}{rk}
\DeclareMathOperator{\codim}{codim}
\DeclareMathOperator{\Ext}{Ext}

\DeclareMathOperator{\length}{length}
\title{\bfseries Log Canonical Counterexamples to the Chen--Jiang Decomposition}
\author{Houari Benammar Ammar}
\date{}
\begin{document}
\maketitle
\begingroup
\renewcommand{\thefootnote}{}
\footnotetext{\textit{2020 Mathematics Subject Classification.} Primary 14F17; Secondary 14E30, 14K02.\\
\textit{Keywords.} Chen--Jiang decomposition, log canonical pair, generic vanishing, elliptic curve, Atiyah bundle.}
\endgroup
\begin{abstract}
We construct a projective log-smooth surface pair $(X,\Delta)$ and a morphism $f:X\to E$ to an elliptic curve such that $f_*\OO_X(K_X+\Delta)$ is Atiyah's indecomposable rank-two bundle $F_2$ of degree zero. This gives a negative answer to Question 3.4 of Jiang \cite{Jiang}. We also construct a log canonical pair $(X,\Delta)$ and a morphism $f$ to an abelian surface such that $f_*\OO_X(m(K_X+\Delta))$ does not admit a Chen--Jiang decomposition for any integer $m\ge2$. Products yield counterexamples in every higher direct image degree and over abelian bases of arbitrary positive dimension.
\end{abstract}

\section{Introduction}
Throughout, we work over the field of complex numbers $\CC$. Let $A$ be an abelian variety and let $\widehat A=\Pic^0(A)$. For a coherent sheaf $\FF$ on $A$, set
\[
V^i(A,\FF)=\{\alpha\in\widehat A\mid H^i(A,\FF\otimes\alpha)\ne0\}.
\]
A coherent sheaf $\FF$ is called a GV-sheaf \cite[\S2]{PP} if
\[
\codim_{\widehat A}V^i(A,\FF)\ge i\qquad(i>0),
\]
and it is called $M$-regular \cite[\S2]{PP} if
\[
\codim_{\widehat A}V^i(A,\FF)>i\qquad(i>0).
\]
We also say that $\FF$ satisfies $\mathrm{IT}^0$ if $H^i(A,\FF\otimes\alpha)=0$ for every $i>0$ and every $\alpha\in\Pic^0(A)$.

\begin{definition}[Chen--Jiang decomposition {\cite[Theorem 1.1]{CJ}}]
A coherent sheaf $\FF$ on an abelian variety $A$ is said to admit a Chen--Jiang decomposition if
\[
\FF\simeq\bigoplus_{j=1}^N\bigl(\alpha_j\otimes p_j^*\FF_j\bigr),
\]
where each $p_j:A\to A_j$ is a surjective morphism of abelian varieties with connected fibers, each $\alpha_j\in\Pic^0(A)$ is torsion, and each $\FF_j$ is a nonzero $M$-regular coherent sheaf on $A_j$.
\end{definition}

The decomposition first appeared in the work of Chen and Jiang in the setting of varieties of maximal Albanese dimension \cite[Theorem 1.1]{CJ}. Pareschi, Popa and Schnell established a general canonical decomposition theorem, including higher direct images, using Hodge modules on complex tori \cite{PPS}. Popa and Schnell proved generic vanishing and positivity results for pluricanonical direct images \cite{PS}, while Lombardi, Popa and Schnell obtained the Chen--Jiang decomposition for pluricanonical direct images of smooth projective varieties \cite{LPS}.

For klt pairs, Jiang and Meng separately established the following decomposition results.

\begin{theorem}[Jiang--Meng {\cite[Theorem 1.3]{Jiang}; \cite[Theorem 1.3 and Proposition 3.1]{Meng}}]
Let $(X,\Delta)$ be a projective klt pair, let $f:X\to A$ be a morphism to an abelian variety, and let $D$ be a Cartier divisor on $X$.
\begin{enumerate}[label=(\roman*),leftmargin=2em]
\item If $D\simQ K_X+\Delta$, then, for every $i\ge0$, there is a finite index set $J_i$ and a decomposition
\[
R^if_*\OO_X(D)\simeq\bigoplus_{j\in J_i}\alpha_{i,j}\otimes p_{i,j}^*\FF_{i,j}.
\]
Here, for each $j\in J_i$, the map $p_{i,j}:A\to A_{i,j}$ is a surjective morphism of abelian varieties with connected fibers, $\FF_{i,j}$ is a nonzero $M$-regular coherent sheaf on $A_{i,j}$, and $\alpha_{i,j}\in\Pic^0(A)$ is a torsion line bundle. In particular, this holds for $f_*\OO_X(D)$ by taking $i=0$.
\item If $D\simQ m(K_X+\Delta)$ for a rational number $m\ge2$, then there is a finite index set $J$ and a decomposition
\[
f_*\OO_X(D)\simeq\bigoplus_{j\in J}\alpha_j\otimes p_j^*\FF_j.
\]
Here, for each $j\in J$, the map $p_j:A\to A_j$ is a surjective morphism of abelian varieties with connected fibers, $\FF_j$ is a nonzero $M$-regular coherent sheaf on $A_j$, and $\alpha_j\in\Pic^0(A)$ is a torsion line bundle.
\end{enumerate}
\end{theorem}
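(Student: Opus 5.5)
The plan is to reduce both parts to the smooth-variety theorems: the Hodge-module decomposition of Pareschi--Popa--Schnell \cite{PPS} for higher direct images of canonical bundles, and the pluricanonical version of Lombardi--Popa--Schnell \cite{LPS}. The bridge is the standard covering trick, which turns a klt $\QQ$-boundary into a direct summand of a canonical bundle. Two facts about Chen--Jiang decompositions will be used throughout. First, the class of sheaves admitting one is closed under direct summands; this uses Krull--Schmidt on $A$ together with the uniqueness and compatibility statements of \cite{PPS}. Second, the class is closed under pushforward along isogenies and under twisting by torsion line bundles.

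\textbf{Part (i).}
\begin{enumerate}[label=(\arabic*),leftmargin=2em]
\item Take a log resolution $\mu:Y\to X$ of $(X,\Delta)$. Write $K_Y+\Delta_Y=\mu^*(K_X+\Delta)+P$, where $P\ge0$ is $\mu$-exceptional with integral part chosen so that $\lfloor\Delta_Y\rfloor=0$ (possible because the pair is klt).
\item Set $D_Y=\mu^*D+\lceil P\rceil$. Then $D_Y-K_Y\simQ\Delta_Y'$ with $\Delta_Y'$ an snc fractional boundary. Local vanishing (relative Kawamata--Viehweg for $\mu$) gives $R\mu_*\OO_Y(D_Y)=\OO_X(D)$. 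Hence it suffices to treat $R^i(f\circ\mu)_*\OO_Y(D_Y)$.
\item Write $N(D_Y-K_Y)\sim N\Delta_Y'$ as a Cartier divisor and take the normalized $N$-th cyclic cover $\pi:Z\to Y$, then resolve to a smooth $Z'$. By Esnault--Viehweg, $\pi_*\omega_{Z'}$ contains $\OO_Y(K_Y+\lceil\text{fractional parts}\rceil)$, and hence $\OO_Y(D_Y)$, as an eigensheaf direct summand.
\item The higher direct images $R^j\pi_*\omega_{Z'}$ vanish for $j>0$ (Grauert--Riemenschneider, since $\pi$ is generically finite). So $R^i(f\mu)_*\OO_Y(D_Y)$ is a direct summand of $R^i(f\mu\pi)_*\omega_{Z'}$.
\item The latter has a Chen--Jiang decomposition by \cite{PPS}. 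Closure under summands then finishes (i).
\end{enumerate}

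\textbf{Part (ii).} Write $D\simQ m(K_X+\Delta)$ and pass to $Y$ as above, replacing $D$ by $\mu^*D+\lceil(m)P\rceil$, so that $f_*$ is unchanged; this is a standard lemma. The natural move is to split $D\simQ K_Y+\Delta_Y+(m-1)(K_Y+\Delta_Y)$ and, as in Jiang/Meng, reduce to showing that $f_*\OO_Y(D)$ is the pushforward of a twisted canonical sheaf $\OO_Y(K_Y+L)$ with $L\simQ$ (klt boundary) $+\ (m-1)(K_Y+\Delta_Y)$. Only the relatively base-point-free part matters, via the relative section ring. Then:
\begin{enumerate}[label=(\arabic*),leftmargin=2em]
\item Using the generic vanishing and positivity of \cite{PS}, together with the Chen--Jiang-type structure of $f_*\OO(m(K+\Delta))$, replace the $\QQ$-linear equivalence class $(m-1)(K_Y+\Delta_Y)$ by an effective divisor whose multiplier ideal computes the image of the multiplication map.
\item This expresses $f_*\OO_Y(D)$ as $f_*(\OO_Y(K_Y+L')\otimes\II)$ with a klt-type boundary, which can again be absorbed into a cyclic cover as in part (i).
\end{enumerate}
An alternative route is to follow \cite{LPS} directly, with the Hodge module replaced by the mixed one attached to the cover. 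I would try the first approach first.

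The main obstacle is part (ii). The effective divisor must be chosen so that no sections are lost: the image of $f_*\OO(D)$ must equal all of $f_*\OO(D)$. This equality likely needs a twist by a sufficiently positive pullback from $A$ and an isogeny $\varphi:A'\to A$ (multiplication by $n$) to untwist, as in the Popa--Schnell effectivity argument. Descending the decomposition along $\varphi$ requires the closure property under isogeny pushforward and summands. The torsion points $\alpha_j$ arise from the eigenspaces of the cyclic cover and the isogeny.
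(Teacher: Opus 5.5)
The paper does not prove this statement; it quotes it from \cite{Jiang} and \cite{Meng}. Your proposal therefore has to stand on its own.

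\textbf{Part (i) is fine.} It is the standard route: log resolution, then relative Kawamata--Viehweg to get $R\mu_*\OO_Y(D_Y)=\OO_X(D)$. Next, the Esnault--Viehweg cyclic cover exhibits $\OO_Y(D_Y)$ as an eigensheaf summand of $(\pi\rho)_*\omega_{Z'}$, and Grauert--Riemenschneider plus \cite{PPS} finish. Closure under direct summands needs no uniqueness statement from \cite{PPS}. Summands of $M$-regular sheaves are $M$-regular. For a quotient $p$ with connected fibres one has $p_*p^*\GG\simeq\GG$, so $p^*$ is fully faithful and preserves indecomposability. Krull--Schmidt then finishes. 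The paper invokes \cite[Proposition 3.6]{LPS} for exactly this fact.

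\textbf{Part (ii) has a genuine gap, exactly where you locate the obstacle.}

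Writing $D=K_Y+L$ with $L\simQ\Delta_Y+(m-1)(K_Y+\Delta_Y)$ is a tautology. To run the argument of (i), you need an effective $\QQ$-divisor $\Gamma\simQ(m-1)(K_Y+\Delta_Y)$ such that no sections are lost globally over $A$, i.e.
$f_*\OO_Y(D)=f_*\bigl(\OO_Y(D)\otimes\mathcal{J}(Y,\Delta_Y+\Gamma)\bigr)$.
Only then can you pass to a log resolution of $(Y,\Delta_Y+\Gamma)$ and apply (i). Nothing in your sketch produces such a $\Gamma$.

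The relative moving part of $|D|$ is generated only by $f^*f_*\OO_Y(D)$, not by global sections. So a general member is available only after adding $f^*H$ with $H$ ample on $A$. What this route actually proves is an identification of $\varphi_N^*f_*\OO_Y(D)\otimes H^{a}$ (fixed $a$, all $N$) with a klt adjoint pushforward. That is precisely the criterion used in \cite{PS} to prove GV.

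The twisted sheaf is $\mathrm{IT}^0$, so its Chen--Jiang decomposition is the trivial one and tells you nothing about $f_*\OO_Y(D)$. Pulling back by multiplication by $N$ scales the class of $H$ by $N^2$; it does not remove the twist. GV is strictly weaker than a Chen--Jiang decomposition: compare $F_2$ in Remark~\ref{rem:F2loci} and $\GG_m$ in Theorem~\ref{thm:pluri}. Hence the ample-twist-plus-isogeny mechanism cannot close the argument by itself.

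Step (1) is also circular: it invokes ``the Chen--Jiang-type structure of $f_*\OO(m(K+\Delta))$'', which is the conclusion. Perhaps you meant to use a decomposition of a higher multiple $f_*\OO_Y(k(m-1)(K_Y+\Delta_Y))$, so as to generate by torsion twists instead of an ample twist. That would be an induction running upward in the multiple, with no base case.

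So part (ii) lacks its key ingredient: some mechanism that gives, for $m\ge2$, an \emph{untwisted} klt representation of $f_*\OO_Y(D)$. A direct-summand embedding into a sheaf known to decompose would also do. This is where the cited works do their real work, and the covering trick of (i) does not supply it.
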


The higher-direct-image assertion does not extend in general to pluricanonical bundles. Shibata \cite[Example 4.5]{Shibata} constructed a smooth projective variety $X$ and a morphism $f:X\to A$ to an abelian variety for which $R^if_*\omega_X^{\otimes m}$ is not GV for some integers $i>0$ and $m>1$. Since a sheaf admitting a Chen--Jiang decomposition is GV, these higher direct images do not admit such a decomposition.

This leads to the following question about log canonical pairs.
\begin{question}[Jiang {\cite[Question 3.4]{Jiang}}]
Let $(X,\Delta)$ be a projective log canonical pair, let $f:X\to A$ be a morphism to an abelian variety, and let $D$ be a Cartier divisor such that
\[
D\simQ m(K_X+\Delta)
\]
for an integer $m\ge1$. Does $f_*\OO_X(D)$ admit a Chen--Jiang decomposition?
\end{question}

Let $F_2$ denote Atiyah's bundle defined by the unique non-split extension
\[
0\longrightarrow\OO_E\longrightarrow F_2\longrightarrow\OO_E\longrightarrow0
\]
on an elliptic curve $E$. The following theorem answers Jiang's question negatively for $m=1$.

\begin{theoremA}[= Theorem \ref{thm:lc}; main result]
Let $E$ be an elliptic curve. There exist a smooth projective surface $X$, a reduced simple-normal-crossings divisor $\Delta$ on $X$, and a morphism $f:X\to E$ such that
\[
f_*\OO_X(K_X+\Delta)\simeq F_2.
\]
Consequently $f_*\OO_X(K_X+\Delta)$ does not admit a Chen--Jiang decomposition.
\end{theoremA}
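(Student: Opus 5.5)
The plan is to realize $F_2$ as the push-forward of a log canonical divisor on a ruled surface over $E$, arranged so that the push-forward is an elementary modification of an easy decomposable bundle. I expect the obstacle to be the direct approach, so I begin with why it fails.

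\textbf{Why the naive choice fails.} For a $\PP^1$-bundle $\pi:P=\PP_E(V)\to E$ with tautological class $\xi$ we have $K_P=-2\xi+\pi^*\det V$. Hence if $\Delta\sim 3\xi-\pi^*\det V+\pi^*N$ is reduced and simple normal crossings, then $K_P+\Delta=\xi+\pi^*N$ and
\[
\pi_*\OO_P(K_P+\Delta)=V\otimes N .
\]
The natural choice is $V=F_2$ with $N$ trivial. But $h^0(\mathrm{Sym}^3F_2)=1$, and $h^0(\mathrm{Sym}^3F_2\otimes a)=0$ for every nontrivial $a\in\Pic^0(E)$. So the only divisor in the required class is $3C_0$, where $C_0$ is the section given by $F_2\to\OO_E$, and $3C_0$ is not reduced. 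I would therefore start from a decomposable bundle with many trisections and correct the push-forward afterwards.

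\textbf{Step 1: the starting bundle.} Take $V=\OO_E\oplus\OO_E(p)$ for a point $p\in E$. Here $\mathrm{Sym}^3V(-p)$ has many sections. By Bertini (after a check on the base locus) I choose a smooth trisection
\[
\Delta_0\in|3\xi-\pi^*\OO_E(p)|,
\]
meeting a chosen fibre $\Phi=\pi^{-1}(p)$ transversally. By the formula above,
\[
\pi_*\OO_P(K_P+\Delta_0)=V\otimes\det V^{-1}\otimes\OO_E(p)=V .
\]
If needed I adjust by a twist to land in degree $1$.

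\textbf{Step 2: an elementary modification.} Next I check that $F_2$ sits in an exact sequence
\[
0\to F_2\to V\to k_p\to0 .
\]
There is a nonzero map $F_2\to\OO_E\to\OO_E(p)$. Combined with the inclusion $\OO_E\subset F_2$, this should realize $F_2$ as the kernel of a surjection $\OO_E\oplus\OO_E(p)\to k_p$ for a suitable non-split quotient. I would confirm this by computing $\Ext^1(\OO_E,\OO_E)$ and tracking the extension class.

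\textbf{Step 3: geometric realization.} Blow up a point $q\in\Phi$ to get $g:X\to P$, and put
\[
\Delta=g^{-1}_*\Delta_0+\widetilde\Phi+G ,
\]
where $G$ is the exceptional curve, $\widetilde\Phi$ is the strict transform of $\Phi$, and $f=\pi\circ g$. Alternatively, I would drop $\widetilde\Phi$ and add $G$ with a negative adjustment; the exact choice of components is what Step 4 must pin down. The point is that $g_*\OO_X(K_X+\Delta)$ equals $\OO_P(K_P+\Delta_0+\Phi)$ twisted by the ideal of $q$ in a controlled way. Pushing forward to $E$ then yields the kernel of the evaluation map $V(p)\to k_p$ at the direction corresponding to $q$, so that after twisting down by $p$ we obtain the elementary transform $\ker(V\to k_p)$. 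The choice of $q$ on $\Phi\cong\PP(V_p)$ selects the quotient $V_p\to k$, and I pick $q$ so that this quotient matches the non-split one from Step 2. The resulting $\Delta$ is reduced, and it is simple normal crossings provided $q\notin\Delta_0$ and the transversality conditions hold.

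\textbf{Step 4 (main obstacle): the local computation at $\Phi$.} The hardest step is to verify that the vertical components of $\Delta$ change the push-forward by exactly the elementary modification, rather than by a twist by $\OO_E(\pm p)$. I would compute in local coordinates near $\Phi$ with the sequence
\[
0\to\OO_X(K_X+\Delta-\widetilde\Phi)\to\OO_X(K_X+\Delta)\to\omega_{\widetilde\Phi}(\text{marked points})\to0 ,
\]
and then use $R^1$-vanishing on the fibres to identify the push-forward. Should this bookkeeping fail, my fallback is to allow a non-snc but log canonical $\Delta$ on $P$ and pass to a log resolution. For an lc pair, $K_Y+\Delta_Y=g^*(K_P+\Delta)+(\text{effective exceptional})$, so the push-forward is unchanged, and this does not affect the required conclusion.

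\textbf{Final step.} Once $f_*\OO_X(K_X+\Delta)\simeq F_2$ is established, the Chen--Jiang conclusion follows. Any decomposition $F_2=\bigoplus_j\alpha_j\otimes p_j^*\FF_j$ on the curve $E$ has each $p_j$ equal to either the identity or the map to a point. Since $F_2$ is indecomposable, the decomposition has a single summand. Every $M$-regular sheaf on $E$ has $V^1$ of codimension at least $2$, hence empty, so it satisfies $\mathrm{IT}^0$, which fails for $F_2$ since $H^1(E,F_2)\ne0$. So the summand must be $\alpha\otimes p^*W$ with $p$ the map to a point. That sheaf is $\alpha^{\oplus 2}$, which is decomposable, a contradiction.
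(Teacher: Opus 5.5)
Your proposal has a genuine gap in Steps 3--4: the configuration you write down produces no ideal twist at all. If $q\notin\Delta_0$ and $\Delta=g^{-1}_*\Delta_0+\widetilde\Phi+G$, then $K_X=g^*K_P+G$ and $g^*\Phi=\widetilde\Phi+G$ give
\[
K_X+\Delta=g^*(K_P+\Delta_0+\Phi)+G .
\]
Hence $g_*\OO_X(K_X+\Delta)\simeq\OO_P(K_P+\Delta_0+\Phi)$ and $f_*\OO_X(K_X+\Delta)\simeq V(p)\simeq\OO_E(p)\oplus\OO_E(2p)$. More generally, for one blow-up,
\[
K_X+\Delta=g^*(K_P+g_*\Delta)+(1-\operatorname{mult}_q g_*\Delta+\epsilon)G,
\]
where $\epsilon\in\{0,1\}$ records whether $G\subset\Delta$. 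So as long as only one boundary branch passes through $q$, the coefficient of $G$ is $\ge0$ and $g_*$ never produces $\II_q$. Dropping $\widetilde\Phi$ or $G$ does not change this, and your SNC requirement $q\notin\Delta_0$ is exactly what rules the twist out. The degree bookkeeping also fails. Even with a twist, keeping $\Phi$ would give $\ker(V(p)\to k_q)\simeq F_2(p)$, and no ``twisting down by $p$'' is available, because $K_X+\Delta$ is determined by $(X,\Delta)$. Your fallback defeats itself. Putting all exceptional curves into the boundary of a log resolution of an lc pair leaves the push-forward unchanged, so you are back to $\pi_*\OO_P(K_P+\Delta_P)\simeq V\otimes N$. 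Your opening paragraph (equivalently Proposition 2.1 with $m=1$) shows that $V\otimes N\simeq F_2$ forces $\Delta_P$ to be the non-reduced divisor $3C_0$ on $\PP_E(F_2)$.

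The missing idea is to blow up a double point of the boundary and leave the exceptional curve out of $\Delta$. If $\Delta_P$ has a node at $q$ and $\Delta$ is its strict transform, then $K_X+\Delta=g^*(K_P+\Delta_P)-G$, so $g_*\OO_X(K_X+\Delta)\simeq\OO_P(K_P+\Delta_P)\otimes\II_q$. This is the paper's mechanism. On $E\times E$ the curves $\{0\}\times E$ and $\Delta_E$ meet transversally at $(0,0)$, and keeping only the strict transforms gives $\mu_*\OO_X(K_X+\Delta)\simeq L\otimes\II_p$. Its push-forward under $\pr_2$ is an extension of $\OO_E$ by $\OO_E$ (restrict to the diagonal) with $h^0=1$, hence $F_2$.

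In your setting this mechanism cannot be arranged. Use fibre coordinates $X_0,X_1$ coming from the summands $\OO_E,\OO_E(p)$ of $V$. Every member of $|3\xi-\pi^*\OO_E(p)|$ is divisible by $X_1$, since the $X_0^3$-coefficient lies in $H^0(\OO_E(-p))=0$. So $\Delta_0=\Sigma_0+B$, where $\Sigma_0=\{X_1=0\}$ is the $(-1)$-section and $B\in|2\xi|$ is disjoint from $\Sigma_0$. An elementary modification of $V$ over $x\neq p$ is decomposable, so the node must lie over $p$, off both coordinate sections. But the $X_0X_1$-coefficient of $B$ lies in $H^0(\OO_E(p))$ and vanishes at $p$. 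Hence $B|_\Phi=aX_0^2+t(p)X_1^2$ with $a\neq0$ (otherwise $B\supset\Sigma_0$). So $B$ can be singular over $p$ only where $X_0=0$, and there the kernel is $\OO_E^{\oplus2}$, not $F_2$. The construction therefore has to be redesigned, not just rebalanced.

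Your Step 2 is correct. Your final Chen--Jiang argument is also correct: there is one summand by indecomposability, $M$-regularity on a curve means $\mathrm{IT}^0$, and that is contradicted by $H^1(E,F_2)\neq0$.
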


The failure also occurs for every integer $m\ge2$. In this case the counterexample is a rank-one torsion-free sheaf on an abelian surface.

\begin{theoremB}[= Theorem \ref{thm:pluri}; second main result]
Let $E$ be an elliptic curve, set $A_0=E\times E$, and let $p=(0,0)$. Put
\[
L=\OO_{A_0}(\{0\}\times E+E\times\{0\}).
\]
There exist a smooth projective surface $X$, a reduced SNC divisor $\Delta$ on $X$, and a birational morphism $\mu:X\to A_0$ such that, for every integer $m\ge2$,
\[
\GG_m:=\mu_*\OO_X\bigl(m(K_X+\Delta)\bigr)\simeq L^m\otimes\II_p^m
\]
is GV but does not admit a Chen--Jiang decomposition.
\end{theoremB}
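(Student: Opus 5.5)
We take $X=\Bl_pA_0$ with exceptional curve $C$, so $\mu:X\to A_0$ is the blow-up and $K_X=C$ (since $K_{A_0}=0$). Let $D_1,D_2$ be the strict transforms of $\{0\}\times E$ and $E\times\{0\}$. These two curves meet transversally at $p$, so after blowing up $D_1$ and $D_2$ are disjoint smooth curves, each meeting $C$ transversally in one point. Put $\Delta=D_1+D_2$; this is a reduced SNC divisor, so $(X,\Delta)$ is log smooth and log canonical. Since $D_1+D_2=\mu^*(\{0\}\times E+E\times\{0\})-2C$, we get
\[
K_X+\Delta\sim \mu^*L-C,\qquad m(K_X+\Delta)\sim\mu^*L^m-mC .
\]
By the projection formula and $\mu_*\OO_X(-mC)=\II_p^m$ (valid because $p$ is a smooth point), this gives $\GG_m\simeq L^m\otimes\II_p^m$.

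Next we compute the cohomology support loci. Write $\alpha=a\boxtimes b$ with $a,b\in\Pic^0(E)$. Twisting $0\to\II_p^m\to\OO\to\OO/\II_p^m\to0$ by $L^m\otimes\alpha$ and using that $L^m\otimes\alpha$ is ample gives
\[
H^2(\GG_m\otimes\alpha)=0,\qquad H^1(\GG_m\otimes\alpha)=\operatorname{coker}\bigl(H^0(L^m\otimes\alpha)\to J^{m-1}_p\bigr),
\]
where $J^{m-1}_p$ denotes the $(m-1)$-jets at $p$. By Künneth, $H^0(L^m\otimes\alpha)=H^0(\OO_E(m0)\otimes a)\otimes H^0(\OO_E(m0)\otimes b)$.

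On $E$, the restriction $H^0(\OO_E(m0)\otimes a)\to J^{m-1}_0$ is a map between two $m$-dimensional spaces. It is an isomorphism iff no section vanishes to order $m$ at $0$, i.e. iff $a\neq0$. Hence for $a,b\ne0$ the map to bivariate jets of bidegree $\le(m-1,m-1)$ is an isomorphism, and its further projection to $J^{m-1}_p$ is surjective. This shows $V^1(\GG_m)\subset(\{0\}\times\widehat E)\cup(\widehat E\times\{0\})$.

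Conversely, suppose $a=0$. The image of $H^0(\OO_E(m0))$ in $J^{m-1}_0$ is then a hyperplane $\ker\varphi$ for some functional $\varphi$. The functional $\varphi$ applied to the $y^0$-part of a bivariate jet involves only the monomials $x^iy^0$ with $i\le m-1$, so it factors through $J^{m-1}_p$. It kills the image of $H^0(L^m\otimes\alpha)$, hence $H^1\ne0$. The case $b=0$ is symmetric. Therefore $V^1(\GG_m)$ is exactly a union of two elliptic curves, which has codimension $1$, and $V^2=\emptyset$. So $\GG_m$ is GV but not $M$-regular. The same holds for every translate $\GG_m\otimes\beta$, since this only translates the loci.

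Finally we exclude a Chen--Jiang decomposition. Because $\GG_m$ has rank one, a decomposition would consist of a single summand $\alpha\otimes p^*\FF$ with $p:A_0\to A_1$ surjective with connected fibers. We consider three cases.
\begin{enumerate}[label=(\roman*),leftmargin=2em]
\item If $\dim A_1=0$, then $\GG_m$ would be a line bundle. This is impossible since $\GG_m$ is not locally free at $p$.
\item If $\dim A_1=1$, the locus where $p^*\FF$ fails to be locally free is a union of fibers of $p$. This contradicts the fact that $\GG_m$ fails to be locally free exactly at the isolated point $p$.
\item If $p$ is an isomorphism, then $\GG_m\otimes\alpha^{-1}$ would be $M$-regular, contradicting the previous paragraph.
\end{enumerate}

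The main obstacle is the jet computation at $a=0$, namely checking that the one-variable defect survives truncation to total degree $\le m-1$. The factorization of $\varphi\otimes(y^0\text{-coefficient})$ through $J^{m-1}_p$ is what handles it.
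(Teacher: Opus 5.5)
Your proof is correct. It differs from the paper's mainly in how the cohomology support loci are handled.

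\textbf{GV property.} The paper gets GV by citing Popa--Schnell generic vanishing for log canonical pairs. You compute everything directly: ampleness of $L^m\otimes\alpha$ kills $H^2$ and reduces $H^1(\GG_m\otimes\alpha)$ to the cokernel of a jet map. Künneth lets you factor that map as a tensor product of two one-variable jet maps, followed by the surjection $\OO/(x^m,y^m)\to\OO/(x,y)^m$. This makes GV self-contained, with no Hodge-theoretic input. It also gives the exact loci
\[
V^2=\emptyset,\qquad V^1=(\{0\}\times\widehat E)\cup(\widehat E\times\{0\}).
\]
The paper only proves the inclusion $\pr_1^*\Pic^0(E)\subset V^1(A_0,\GG_m)$, which is all it needs. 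The price of your route is that it relies on the product structure of this particular example, whereas the citation gives GV for any lc pair.

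\textbf{Non-$M$-regularity.} Your functional $\varphi\circ(\text{restriction to }y=0)$ is essentially the jet-level dual of the paper's argument. The paper restricts to a coordinate curve, shows $H^2$ of the kernel vanishes, and obtains a surjection onto $H^1(C_1,\OO_{C_1})\simeq\CC$. In your version, the one-variable defect is $\operatorname{coker}\bigl(H^0(\OO_E(m0))\to J^{m-1}_0\bigr)\simeq H^1(E,\OO_E)$. So this is the same idea in different language.

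\textbf{Excluding a Chen--Jiang decomposition.} Your case analysis matches the paper's except when $\dim A_1=1$. There you use that the non-locally-free locus of a flat pullback is a union of fibers. The paper instead notes that a torsion-free rank-one sheaf on a curve is locally free. Both arguments work. For the ``single summand'' step you should say explicitly, as the paper does, that $\GG_m$ is torsion-free, so every nonzero summand has positive rank.
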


\begin{theoremC}[= Corollary \ref{cor:all}; higher direct images and higher-dimensional bases]
For every pair of integers $d\ge1$ and $j\ge0$, there exist a projective log-smooth pair $(W,\Delta_W)$ with reduced boundary, an abelian variety $B_d$ of dimension $d$, and a morphism $\varphi:W\to B_d$ such that
\[
\dim W=d+j+1,\qquad R^j\varphi_*\OO_W(K_W+\Delta_W)\simeq p^*F_2,
\]
where $p:B_d\to E$ is a quotient onto an elliptic curve. In particular, this higher direct image does not admit a Chen--Jiang decomposition.
\end{theoremC}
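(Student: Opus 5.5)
We deduce Theorem C from Theorem A by taking products and applying the Künneth formula together with flat base change. Let $(X,\Delta)$ and $f:X\to E$ be as in Theorem A, so that $f_*\OO_X(K_X+\Delta)\simeq F_2$.

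\textbf{Adding the fibre dimension $j$.} Choose a smooth projective variety $Y$ of dimension $j$ with $h^j(Y,\OO_Y(K_Y))=1$ and $H^k(Y,\OO_Y(K_Y))$ irrelevant for $k\ne j$. The simplest choice is $Y=\PP^j$, or any smooth projective $j$-fold; for $j=0$ take $Y$ to be a point. Set $Z=X\times Y$ with boundary $\Delta_Z=\Delta\times Y$. This boundary is still reduced SNC, and $K_Z+\Delta_Z=\pr_1^*(K_X+\Delta)+\pr_2^*K_Y$. For $g=f\circ\pr_1:Z\to E$, the Künneth formula (flat base change along $Y\to\mathrm{pt}$) gives
\[
R^jg_*\OO_Z(K_Z+\Delta_Z)\simeq\bigoplus_{a+b=j}R^af_*\OO_X(K_X+\Delta)\otimes H^b(Y,K_Y).
\]
Since $f$ has relative dimension $1$, this needs the summands with $a=1$ handled. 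To avoid that, I take $Y$ with $H^b(Y,K_Y)=0$ for $b<j$ and $H^j(Y,K_Y)=\CC$. By Serre duality this is $H^{j-b}(Y,\OO_Y)=0$ for $b<j$, which holds for $Y=\PP^j$. Then only $a=0,b=j$ survives and $R^jg_*\simeq F_2$, with $\dim Z=j+2$.

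\textbf{Raising the base dimension to $d$.} Let $B'$ be an abelian variety of dimension $d-1$, put $B_d=E\times B'$ with projection $p:B_d\to E$, and set
\[
W=Z\times B',\qquad \Delta_W=\Delta_Z\times B',\qquad \varphi=g\times\mathrm{id}_{B'}.
\]
Since $K_{B'}=0$, we have $K_W+\Delta_W=\pr^*(K_Z+\Delta_Z)$. Flat base change along $B_d\to E$, using the fibre square $W=Z\times_E B_d$, gives $R^j\varphi_*\OO_W(K_W+\Delta_W)\simeq p^*F_2$. The dimension is $\dim W=j+2+d-1=d+j+1$, and $(W,\Delta_W)$ is log smooth with reduced boundary.

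\textbf{Failure of the Chen--Jiang decomposition.} It remains to show that $p^*F_2$ admits no Chen--Jiang decomposition; this is the main point. The argument proceeds in three steps.

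First, $p^*F_2$ is indecomposable. Its endomorphism ring is $H^0(B_d,p^*\mathcal{E}nd F_2)=H^0(E,\mathcal{E}nd F_2)=H^0(E,F_2\otimes F_2^\vee)$, because $p_*\OO_{B_d}=\OO_E$. This ring is local, isomorphic to $\CC[t]/t^2$.

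Second, a decomposition would therefore consist of a single summand $\alpha\otimes q^*\FF$ with $\FF$ an $M$-regular sheaf of rank $2$ on a quotient $A_1$.

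Third, restrict to fibres. Restricting $\alpha\otimes q^*\FF$ to a general fibre of $q$ gives $\alpha|\otimes\OO^{2}$. Restricting $p^*F_2$ to a fibre of $p$ gives $\OO^2$. Comparing these forces the following. Either $A_1$ is a point, which is impossible since $\FF$ would then be a vector space and $p^*F_2$ would be a twist of a trivial bundle, but $F_2\not\simeq\OO_E^2$. Or $q$ factors through $p$ up to isogeny.

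A cleaner route for this last step is to compute $V^i$ directly. By Künneth, $V^1(p^*F_2)=V^0(p^*F_2)=\{0\}$, since $H^0(E,F_2\otimes\beta)=0$ for $\beta\ne0$ and $\beta\in\widehat E$. Meanwhile $V^0$ of a Chen--Jiang sheaf is a finite union of torsion translates of the subtori $\widehat{A_1}$, and for an $M$-regular sheaf $\FF$, $V^0(\FF)=\widehat{A_1}$. Hence $A_1$ is a point, so $p^*F_2\simeq\alpha^{\oplus 2}$, contradicting $h^0(p^*F_2)=1$.

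I expect this last step — checking that $V^0$ of an $M$-regular sheaf is all of $\widehat{A_1}$ (it is, since $M$-regular sheaves are ample/generically globally generated with $\chi>0$) and that $V^0(p^*F_2)=\{0\}$ — to be the only place needing care.
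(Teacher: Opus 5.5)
Your proof is correct, but it raises the fibre dimension by a different route from the paper.

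The paper obtains Theorem C from Theorem \ref{thm:base} with $W=X\times A\times B$, where $B$ is an abelian $j$-fold. Since $K_B\sim0$, the adjoint bundle of $W$ is pulled back from $X$. However, $H^{j-1}(B,\OO_B)\neq0$, so the Leray computation collapses only because of Lemma \ref{lem:vanishing}, i.e.\ $R^qf_*\OO_X(K_X+\Delta)=0$ for $q>0$. Your factor $\PP^j$ has canonical bundle with cohomology only in degree $j$, so it kills every K\"unneth term with $a\ge1$ directly, and you never need that relative vanishing. Conversely, that vanishing shows your first remark, that any connected smooth projective $j$-fold would do, is actually true.

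What the abelian factor buys the paper is the uniform formula $(p^*F_2)^{\oplus\binom bq}$ in every degree $q$. It also keeps $K_W+\Delta_W$ equal to the pullback of the effective divisor $K_X+\Delta\sim D_0+D_\Delta+R$, whereas yours is anti-ample on the $\PP^j$-fibres when $j\ge1$. Neither point matters for Theorem C as stated. Your base-enlarging step by flat base change along $E\times B'\to E$ coincides with the paper's.

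For the non-decomposition, your reduction to one summand via $\mathrm{End}(F_2)\simeq\CC[t]/(t^2)$ is valid but not needed. The paper applies \cite[Lemma 3.3]{LPS} to an arbitrary decomposition; it must, since $(p^*F_2)^{\oplus N}$ is decomposable for $N>1$. It then runs exactly your comparison of $V^0=\{\OO_{B_d}\}$ with $h^0$ and the rank.

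Finally, the fact that $V^0(\FF)=\widehat{A_1}$ for a nonzero $M$-regular $\FF$ is best justified by continuous global generation, not ampleness. Otherwise the complement of $V^0(\FF)$ would be a nonempty open set of $\widehat{A_1}$ on which all twisted sections vanish.
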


In Theorem D, we follow the notation of Theorem B.

\begin{theoremD}[= Theorem \ref{thm:pluriproduct}; higher-dimensional examples for $m\ge2$]
Let $\mu:X'\to A_0=E\times E$ and $(X',\Delta')$ be the construction of Theorem B. For every $d\ge2$ and $r\ge0$, choose abelian varieties $C$ and $T$ with $\dim C=d-2$ and $\dim T=r$, and set
\[
B_d=A_0\times C,\qquad W=X'\times C\times T,\qquad\Delta_W=\pr_{X'}^*\Delta'.
\]
Let $p:B_d\to A_0$ be the projection and define
\[
\varphi:W\longrightarrow B_d,\qquad\varphi(x,c,t)=(\mu(x),c).
\]
Then $(W,\Delta_W)$ is log smooth with reduced boundary, $\dim W=d+r$, and $\varphi$ is surjective with connected fibers. Furthermore, for every integer $m\ge2$,
\[
R^j\varphi_*\OO_W\bigl(m(K_W+\Delta_W)\bigr)
\simeq(p^*\GG_m)^{\oplus\binom rj}\qquad(0\le j\le r).
\]
None of these sheaves admits a Chen--Jiang decomposition, and the higher direct images vanish for $j>r$.
\end{theoremD}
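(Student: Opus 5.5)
We take Theorem B as given: $(X',\Delta')$ is log smooth with reduced boundary and $\mu_*\OO_{X'}(m(K_{X'}+\Delta'))\simeq\GG_m$ is GV but admits no Chen--Jiang decomposition. The plan is to factor $\varphi$ as a product map and combine flat base change with the Künneth formula.

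\textbf{Geometry of $W$ and $\varphi$.} Since $C$ and $T$ are abelian varieties, $W=X'\times C\times T$ is smooth and $\Delta_W=\pr_{X'}^*\Delta'$ is a reduced SNC divisor. So $(W,\Delta_W)$ is log smooth with reduced boundary, and $\dim W=2+(d-2)+r=d+r$. The map $\varphi=(\mu\times\mathrm{id}_C)\circ q$, where $q:W\to X'\times C$ is the projection, is surjective with connected fibers: $\mu$ is birational onto the normal surface $A_0$, so $\mu\times\mathrm{id}_C$ has connected fibers, and $q$ has fibers $T$.

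\textbf{Computing the direct images.} We have $K_W=\pr_{X'}^*K_{X'}+\pr_C^*K_C+\pr_T^*K_T$, and $K_C,K_T$ are trivial. Hence
\[
\OO_W\bigl(m(K_W+\Delta_W)\bigr)\simeq\pr_{X'}^*\OO_{X'}\bigl(m(K_{X'}+\Delta')\bigr).
\]
Write $\mathcal M=\OO_{X'}(m(K_{X'}+\Delta'))$, and let $\pi:X'\times C\to X'$ be the projection.
\begin{enumerate}[label=(\arabic*),leftmargin=2em]
\item Since $q$ is the base change of the flat morphism $T\to\mathrm{pt}$, flat base change gives
\[
R^jq_*q^*\pi^*\mathcal M\simeq\pi^*\mathcal M\otimes H^j(T,\OO_T)\simeq(\pi^*\mathcal M)^{\oplus\binom rj}.
\]
This vanishes for $j>r$.
\item Apply the Leray spectral sequence for $\varphi=(\mu\times\mathrm{id}_C)\circ q$. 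We need $R^k(\mu\times\mathrm{id}_C)_*\pi^*\mathcal M=0$ for $k>0$. By flat base change along $C$ this equals $p^*R^k\mu_*\mathcal M$, so it suffices that $R^1\mu_*\mathcal M=0$ for $m\ge2$.
\item Given this vanishing, the spectral sequence degenerates. Together with $(\mu\times\mathrm{id}_C)_*\pi^*\mathcal M\simeq p^*\GG_m$ (flat base change again), this gives
\[
R^j\varphi_*\OO_W\bigl(m(K_W+\Delta_W)\bigr)\simeq(p^*\GG_m)^{\oplus\binom rj},
\]
which vanishes for $j>r$.
\end{enumerate}

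\textbf{Main obstacle: $R^1\mu_*\mathcal M=0$.} This must come from the construction in Theorem B, which is not visible here. The first thing to try is relative Kawamata--Viehweg vanishing. Write
\[
m(K_{X'}+\Delta')=K_{X'}+\Delta'+(m-1)(K_{X'}+\Delta').
\]
If $K_{X'}+\Delta'$ is $\mu$-nef (this is plausible: the construction presumably blows up over $p$ and uses the exceptional curves as the boundary), then the log-smooth, reduced-boundary version of relative vanishing for lc pairs (Fujino) gives $R^1\mu_*\mathcal M=0$. Here $\mu$-bigness is automatic since $\mu$ is birational. If nefness fails, a fallback is to compute directly along the exceptional locus over $p$ using the theorem on formal functions, in the same way the isomorphism with $L^m\otimes\II_p^m$ is presumably computed in Theorem B. Alternatively, one can bypass the issue: the statement only requires the displayed formula, so it suffices to identify $R^1\mu_*\mathcal M$ from the proof of Theorem B.

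\textbf{No Chen--Jiang decomposition.} Suppose $(p^*\GG_m)^{\oplus N}$, with $N\ge1$, admitted a Chen--Jiang decomposition. Restricting to a slice $A_0\times\{c\}$ keeps a decomposition of the same form: quotients of $B_d$ restrict to quotients of $A_0$, possibly after Stein factorization, and $M$-regularity must be checked to persist under this restriction. Alternatively, push forward along $B_d\to A_0$ after twisting. Either way we get a decomposition of $\GG_m^{\oplus N}$. One then needs that a direct summand of a sheaf admitting a Chen--Jiang decomposition again admits one; this follows from the canonical nature of the decomposition in Pareschi--Popa--Schnell, where the decomposition is determined by the Fourier--Mukai support loci. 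Applying this to the summand $\GG_m$ contradicts Theorem B. This descent argument is the second delicate point; I would reduce it, as Theorem B likely does, to an invariant such as the rank-one torsion-free structure together with the cosupport at $p$.
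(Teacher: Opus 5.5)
Your computation of the direct images follows the paper's route: factor $\varphi$ through $X'\times C$, use flat base change and K\"unneth for the $T$-factor, then Leray. However, you leave the vanishing $R^1\mu_*\mathcal M=0$ conditional on a wrong guess about the construction. In Theorem B, $X'=\Bl_pA_0$ has a single exceptional curve $R$, and $\Delta'$ is the sum of the strict transforms of $C_1$ and $C_2$, not an exceptional curve. With $\Delta'=R$ you would get $(K_{X'}+R)\cdot R=2R^2=-2$, so your nefness test would fail. In fact $K_{X'}+\Delta'\sim\mu^*(C_1+C_2)-R$, hence $\mathcal M\simeq\mu^*L^m\otimes\OO_{X'}(-mR)$. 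The paper proves $R^1\mu_*\OO_{X'}(-kR)=0$ for all $k\ge0$ by induction from $0\to\OO_{X'}(-(k+1)R)\to\OO_{X'}(-kR)\to\OO_R(k)\to0$, using that $\II_p^k\to\II_p^k/\II_p^{k+1}\simeq\mu_*\OO_R(k)$ is surjective. Your relative-vanishing idea also works, because $(K_{X'}+\Delta')\cdot R=1$ makes $K_{X'}+\Delta'$ relatively ample over $A_0$. Note, though, that for an lc pair ``$\mu$-nef and $\mu$-big'' is not the correct hypothesis in general; you need relative ampleness, or nef and log big.

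The serious gap is in showing that $(p^*\GG_m)^{\oplus N}$ has no Chen--Jiang decomposition: you never control the quotients $q_i:B_d\to Q_i$.
\begin{itemize}
\item Restricting $\bigoplus_i\alpha_i\otimes q_i^*\FF_i$ to $A_0\times\{c\}$ gives pieces pulled back along $A_0\to Q_i$, which need not be surjective. You would need the restrictions of the $M$-regular sheaves $\FF_i$ to abelian subvarieties to remain $M$-regular, and you give no argument for this.
\item Pushing forward along $p$ is justified only once each $q_i$ factors through $p$ and $\alpha_i$ is pulled back from $A_0$, which is exactly the point at issue.
\item Rank one together with the non-locally-free locus $\{p\}\times C$ only yields $\ker q_i\subseteq\{0\}\times C$.
\end{itemize}
The missing idea is to compute $V^0$. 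Since $H^2(A_0,\GG_m\otimes\alpha)=0$ and $\chi(\GG_m\otimes\alpha)=m^2-m(m+1)/2=m(m-1)/2>0$, one has $h^0(\GG_m\otimes\alpha)>0$ for every $\alpha\in\Pic^0(A_0)$. K\"unneth then gives $V^0(B_d,p^*\GG_m)=p^*\Pic^0(A_0)$. The paper reduces to $N=1$ on $B_d$ using the direct-summand property (Lombardi--Popa--Schnell, Proposition 3.6). This is your PPS remark, but applied on $B_d$ rather than on $A_0$. Rank one then forces a single summand $\tau\otimes q^*\mathcal N$ with $q:B_d\to Q$. The description of $V^0$ of a Chen--Jiang sheaf (Lombardi--Popa--Schnell, Lemma 3.3) gives $p^*\Pic^0(A_0)=\tau^{-1}\otimes q^*\Pic^0(Q)$. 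Hence $q=p$ after identifying $Q$ with $A_0$, and $\tau=p^*\tau_0$ with $\tau_0$ torsion. Applying $p_*$ yields $\GG_m\simeq\tau_0\otimes\mathcal N$, which is a Chen--Jiang decomposition of $\GG_m$ and contradicts Theorem B.
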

\begin{remark}
Theorem D can be viewed as complementary to Shibata's work since the higher direct images constructed here are GV-sheaves, but they do not admit a Chen--Jiang decomposition.
\end{remark}
For Theorem A, we blow up the intersection of the sections $\{0\}\times E$ and $\Delta_E$ in $E\times E$ and then compose the blow-up morphism with the second projection to $E$. The resulting direct image is a non-split extension of $\OO_E$ by itself. For Theorem B, we blow up the intersection of the two coordinate elliptic curves. The direct image on $A_0$ is $L^m\otimes\II_p^m$, and a restriction exact sequence shows that it is not $M$-regular. Its rank and failure of local freeness exclude a Chen--Jiang decomposition.

\noindent\textbf{Acknowledgment.} I would like to thank NCTS for its hospitality during my visit in March 2026, and Jungkai Chen for the invitation. I also thank Xi Chen and Steven Lu for general mathematical discussions. After the first version was posted on Arxiv, Fanjun Meng brought to my attention his paper \cite{MengAdjoint} on polarized pairs and informed me that he recently obtained an independent proof of Theorem A, which he will plan to include in a forthcoming paper. I would like to thank him for informing me and for his interest.

\noindent\textbf{Declaration on the use of artificial intelligence.} I benefited from mathematical discussions with ChatGPT 5.6 Sol. I started thinking about this question in 2024. The ideas were developed by myself, and I am responsible for the correctness of the proofs and the content of the manuscript. ChatGPT 5.6 helped me with formatting and LaTeX.

\section{The motivational example}
As motivation, We first give a ruled-surface construction realizing $F_2$ as an adjoint direct image with a boundary coefficient greater than one (see Remark \ref{remark2.2} below for the history of this construction).
\begin{proposition}\label{ex:ruled}
Let $E$ be an elliptic curve and let
\[
0\longrightarrow\OO_E\longrightarrow F_2\longrightarrow\OO_E\longrightarrow0
\]
be the non-split extension. Let $\pi:Z=\PP_E(F_2)\to E$ be the projective bundle, and let $S\subset Z$ be the section defined by the quotient $F_2\twoheadrightarrow\OO_E$. Then
\[
\OO_Z(S)\simeq\OO_Z(1),\qquad \pi_*\OO_Z(S)\simeq F_2,\qquad K_Z\sim-2S.
\]
For every rational $m>0$, the divisor $B=(2+1/m)S$ satisfies $S\simQ m(K_Z+B)$. Furthermore, if $B'\ge0$ is a rational divisor such that $S\simQ m(K_Z+B')$, then $B'=(2+1/m)S$. In particular, $(Z,B')$ is not log canonical.
\end{proposition}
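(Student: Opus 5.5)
We work with the Grothendieck convention, so that $\PP_E(F_2)$ parametrizes rank-one quotients of $F_2$ and $\pi_*\OO_Z(1)=F_2$. The plan has four steps: identify the class of $S$, compute $K_Z$, check the stated relation for $B$, and prove uniqueness via $S^2=0$ and the rigidity of $S$.

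\emph{The class of $S$.} A quotient $F_2\twoheadrightarrow\OO_E$ gives a section $\sigma$ of $\pi$ with $\sigma^*\OO_Z(1)\simeq\OO_E$. The kernel $\OO_E\subset F_2$ gives a nonzero element $s\in H^0(\OO_Z(1))=H^0(F_2)$. This $s$ vanishes exactly on the section where the corresponding composite map $\OO_E\to F_2\to$ (quotient) is zero, namely on $S$. Its zero locus has class $\OO_Z(1)$, meets each fibre in one point, and so is reduced and equal to $S$. Hence $\OO_Z(S)\simeq\OO_Z(1)$, and $\pi_*\OO_Z(S)\simeq F_2$ follows from the projection formula for projective bundles.

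\emph{The canonical class.} The relative Euler sequence gives
\[
K_Z=-2\OO_Z(1)+\pi^*(K_E+\det F_2)=-2\OO_Z(1),
\]
since $K_E\sim0$ and $\det F_2\simeq\OO_E$. Therefore $K_Z\sim-2S$. For $B=(2+1/m)S$ we get $m(K_Z+B)\sim_\QQ m\cdot(1/m)S=S$, which is the stated relation.

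\emph{Uniqueness of $B'$.} Suppose $B'\ge0$ satisfies $S\simQ m(K_Z+B')$. Then $B'\simQ(2+1/m)S$, so $B'$ is an effective $\QQ$-divisor numerically equivalent to $cS$ with $c=2+1/m$. The key inputs are that $S^2=\deg\sigma^*\OO_Z(1)=0$, that $\mathrm{NS}(Z)$ is generated by $S$ and a fibre $f$, and that $S$ is the only effective curve numerically equivalent to a multiple of $S$. This last fact is the main obstacle. I would use the classical fact that on $\PP(F_2)$ the curve $S$ is the unique curve in its numerical class, and indeed in any class $kS$: one has $h^0(\OO_Z(kS)\otimes\pi^*\alpha)=h^0(\mathrm{Sym}^kF_2\otimes\alpha)$, and $\mathrm{Sym}^kF_2\simeq F_{k+1}$ has sections only for $\alpha=\OO_E$, where they form a one-dimensional space. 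Write $B'=\sum a_iC_i$ with $a_i>0$. Every irreducible curve $C\ne S$ satisfies $C\cdot S\ge0$. Writing $C\equiv aS+bf$, we have $C\cdot f=a\ge0$ and $C\cdot S=b\ge0$. Summing over the components, $B'\equiv cS$ forces $\sum a_ib_i=0$, so every component has $b_i=0$, that is, $C_i\equiv a_iS$. A multisection $C\equiv kS$ with $k\ge1$ gives, after twisting by a suitable $\alpha\in\Pic^0$, a section of $\mathrm{Sym}^kF_2\otimes\alpha$. This forces $\alpha$ trivial and $C$ equal to $kS$ as divisors, hence $C=S$ when $C$ is irreducible. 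Fibres are excluded because they have $a=0$ but $b=1$. Therefore $B'=cS$.

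\emph{Failure of log canonicity.} The coefficient $2+1/m$ exceeds $1$, so $(Z,B')$ is not log canonical.
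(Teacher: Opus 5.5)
Your proof is correct, but the uniqueness step takes a different route from the paper's. The paper stays with linear equivalence. By definition of $\simQ$ there is a sufficiently divisible $q>0$ with $qB'$ integral and $qB'\sim nS$ for $n=q(2+1/m)$. The single fact $h^0(Z,\OO_Z(nS))=h^0(E,\mathrm{Sym}^nF_2)=1$ then gives $qB'=nS$ at once, with no intersection theory.

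You instead pass to the numerical class $B'\equiv cS$ and work component by component. Since $S^2=0$ and $C\cdot S\ge0$ for $C\ne S$, every component satisfies $C_i\cdot S=0$, hence $C_i\equiv k_iS$ with $k_i\ge1$. You then need an extra input: $H^0(E,\mathrm{Sym}^kF_2\otimes\alpha)=0$ for nontrivial $\alpha\in\Pic^0(E)$. This holds because $F_{k+1}$ is an iterated extension of copies of $\OO_E$. It upgrades $C_i\equiv k_iS$ to $\OO_Z(C_i)\simeq\OO_Z(k_iS)$, and so $C_i=S$.

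Your argument is longer, but it proves more: $cS$ is the only effective $\QQ$-divisor in its numerical class. So the conclusion survives if $\simQ$ is weakened to $\equiv$. Equivalently, the ray spanned by $S$ in $\overline{\mathrm{NE}}(Z)$ contains no other irreducible curve.

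Two small things to tidy. You use $a_i$ both for the coefficients of $B'$ and for $C_i\cdot f$. You should also state that once every component is $S$, writing $B'=aS$ and intersecting with $f$ gives $a=c$. Your first step also actually justifies $\OO_Z(S)\simeq\OO_Z(1)$, via the zero locus of the section coming from $\OO_E\subset F_2$; the paper only asserts this.
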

\begin{proof}
The quotient $F_2\twoheadrightarrow\OO_E$ defines a section $S$ of $\pi$. Since the kernel is trivial, the divisor associated with this section is the tautological divisor, and hence $\OO_Z(S)\simeq\OO_Z(1)$. The projective bundle and canonical bundle formulas give
\[
\pi_*\OO_Z(S)\simeq F_2,\qquad K_Z\sim-2S+\pi^*(K_E+\det F_2)\sim-2S.
\]
Thus $m(K_Z+(2+1/m)S)\simQ S$.

Now suppose $B'\ge0$ and $S\simQ m(K_Z+B')$. Then
\[
B'\simQ\left(2+\frac1m\right)S.
\]
Choose an integer $q>0$ sufficiently divisible that $qB'$ is integral and
\[
n=q\left(2+\frac1m\right)\in\ZZ,\qquad qB'\sim nS.
\]
In characteristic zero, Atiyah's classification \cite{Atiyah} gives
\[
\operatorname{Sym}^nF_2\simeq F_{n+1},\qquad h^0(E,F_{n+1})=1.
\]
Consequently
\[
h^0(Z,\OO_Z(nS))=h^0(E,\operatorname{Sym}^nF_2)=1.
\]
Since $nS$ is effective, the complete linear system $|nS|$ has exactly one element. Since $nS$ itself belongs to $|nS|$, that unique divisor is $nS$. Hence $qB'=nS$, so $B'=(2+1/m)S$. Its coefficient along $S$ is greater than one.
\end{proof}

\begin{remark}\label{remark2.2}
The author was inspired by Hu \cite[Example 4.5]{Hu} to give Proposition \ref{ex:ruled}. The author recently learned that Meng gives a related example for polarized pairs \cite[Example 4.3]{MengAdjoint}, showing that the GV conclusion of \cite[Theorem 1.4]{MengAdjoint} does not imply a Chen--Jiang decomposition.
\end{remark}

\section{The log canonical case}
\begin{proposition}\label{prop:subsheaf}
Let $(X,\Delta)$ be a projective log canonical pair such that $X$ is klt and $\Delta$ is a reduced Cartier divisor. Let $f:X\to A$ be a morphism to an abelian variety, and let $m\ge2$ be an integer such that $D_m=m(K_X+\Delta)$ is Cartier. Then the natural subsheaf
\[
\FF_m:=f_*\OO_X(D_m-\Delta)\ \subset\ f_*\OO_X(D_m)
\]
admits a Chen--Jiang decomposition whenever it is nonzero.
\end{proposition}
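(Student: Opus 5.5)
Since $\Delta$ is reduced and Cartier, we have
\[
D_m-\Delta=m(K_X+\Delta)-\Delta=K_X+(m-1)(K_X+\Delta)+(m-1)\Delta-(m-1)\Delta ,
\]
so the natural grouping is
\[
D_m-\Delta=m K_X+(m-1)\Delta .
\]
The plan is to rewrite $D_m-\Delta$ as an $\QQ$-multiple of a klt adjoint divisor and then invoke the Jiang--Meng theorem (part (ii), or part (i) when the coefficient is $1$).

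\textbf{Step 1: find a klt boundary.} Set
\[
\Delta_m:=\tfrac{m-1}{m}\Delta .
\]
Then
\[
m(K_X+\Delta_m)=mK_X+(m-1)\Delta=D_m-\Delta .
\]
The pair $(X,\Delta_m)$ is klt. To check this, note that $X$ is klt and $(X,\Delta)$ is lc. Since $0\le\Delta_m<\Delta$ and $\Delta_m$ has coefficients strictly less than $1$, the discrepancies interpolate. Concretely,
\[
K_X+\Delta_m=\tfrac1m K_X+\tfrac{m-1}{m}(K_X+\Delta),
\]
so for any divisor $F$ over $X$,
\[
a(F,X,\Delta_m)=\tfrac1m a(F,X,0)+\tfrac{m-1}{m}a(F,X,\Delta).
\]
Here $a(F,X,0)>-1$ because $X$ is klt, and $a(F,X,\Delta)\ge-1$ because $(X,\Delta)$ is lc. Hence $a(F,X,\Delta_m)>-1$. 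This is a convex combination with positive weight on the klt term, and it also covers components of $\Delta$ itself on $X$, where the coefficient is $\tfrac{m-1}{m}<1$.

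\textbf{Step 2: apply Jiang--Meng.} The divisor $D_m-\Delta$ is Cartier, since $D_m$ and $\Delta$ are, and
\[
D_m-\Delta\sim m(K_X+\Delta_m)
\]
with $m\ge2$. Theorem (Jiang--Meng)(ii), applied to the projective klt pair $(X,\Delta_m)$, gives a Chen--Jiang decomposition of $f_*\OO_X(D_m-\Delta)=\FF_m$. Nonvanishing is needed because the definition requires the $\FF_j$ to be nonzero, so the index set must be nonempty; this is why the statement includes the hypothesis ``whenever it is nonzero.''

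The inclusion into $f_*\OO_X(D_m)$ comes from $\Delta\ge0$ effective, via multiplication by the section of $\OO_X(\Delta)$, and $f_*$ is left exact. The only delicate point is the discrepancy interpolation in Step 1, in particular making sure the klt hypothesis on $X$ (rather than mere normality) is used to get strict inequality. If $K_X$ is not $\QQ$-Cartier that argument would fail, but $X$ klt includes $K_X$ being $\QQ$-Cartier, so the linear-in-boundary formula for discrepancies is legitimate. I expect no further obstacle.
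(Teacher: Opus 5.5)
Your proposal is correct and follows the paper's proof essentially verbatim. You use the same boundary $\Delta_m=\tfrac{m-1}{m}\Delta$, the same convex-combination argument for the klt property (phrased with discrepancies $>-1$ where the paper uses log discrepancies $>0$), and the same appeal to Jiang--Meng (ii), with the inclusion coming from the ideal sequence of the effective Cartier divisor $\Delta$ and left exactness of $f_*$.
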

\begin{proof}
We have
\[
m(K_X+\Delta)=m\left(K_X+\frac{m-1}{m}\Delta\right)+\Delta,
\]
and hence
\[
D_m-\Delta=m\left(K_X+\frac{m-1}{m}\Delta\right).
\]
Since $\Delta$ is an effective Cartier divisor, tensoring its ideal sequence by $\OO_X(D_m)$ gives
\begin{equation}\label{eq:boundarysequence}
0\longrightarrow\OO_X(D_m-\Delta)\longrightarrow\OO_X(D_m)
\longrightarrow\OO_\Delta(D_m)\longrightarrow0.
\end{equation}
Taking direct images yields the exact sequence
\[
0\longrightarrow\FF_m\longrightarrow f_*\OO_X(D_m)
\longrightarrow(f|_\Delta)_*\OO_\Delta(D_m)
\longrightarrow R^1f_*\OO_X(D_m-\Delta).
\]
Thus $\FF_m$ is a subsheaf of $f_*\OO_X(D_m)$.

Put $\Delta_m=(m-1)\Delta/m$. For every prime divisor $T$ over $X$, its log discrepancy satisfies
\[
a(T,X,\Delta_m)=\frac1m a(T,X,0)+\frac{m-1}{m}a(T,X,\Delta)>0,
\]
because $X$ is klt and $(X,\Delta)$ is lc. Therefore $(X,\Delta_m)$ is klt. The divisor $D_m-\Delta$ is Cartier and equals $m(K_X+\Delta_m)$, so $\FF_m$ admits the Chen--Jiang decomposition.
\end{proof}

\begin{remark}\label{rem:ammarsubsheaf}
The existence of a Chen--Jiang subsheaf in an lc direct image was considered in the author's unrevised preprint \cite[Proposition 1.4]{Ammar}, for sufficiently large and divisible multiples under a nonnegative fiberwise Kodaira-dimension hypothesis. There, the author predicts that the best result we could obtain is the existence of a higher-rank subsheaf that admits a Chen--Jiang decomposition.
\end{remark}

\begin{theorem}[Log canonical counterexample]\label{thm:lc}
Let $E$ be an elliptic curve. There exist a smooth projective surface $X$, a reduced simple-normal-crossings divisor $\Delta$ on $X$, and a morphism $f:X\to E$ such that
\[
f_*\OO_X(K_X+\Delta)\simeq F_2.
\]
In particular, $f_*\OO_X(K_X+\Delta)$ does not admit a Chen--Jiang decomposition.
\end{theorem}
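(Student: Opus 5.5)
The plan is to realize the construction sketched in the introduction. Let $Y=E\times E$, let $C_1=\{0\}\times E$ and let $D\subset Y$ be the diagonal. Both are sections of the second projection $\pr_2:Y\to E$, and they meet transversally in the single point $p=(0,0)$. Let $\mu:X=\Bl_pY\to Y$ with exceptional curve $G$, put $f=\pr_2\circ\mu$, and let $\Delta=C_1'+D'$ be the sum of the strict transforms. Then $C_1'$ and $D'$ are disjoint smooth sections of $f$, so $\Delta$ is reduced and SNC. Since $K_Y\sim0$, we have $K_X=G$. The strict transforms satisfy $C_1'+D'=\mu^*(C_1+D)-2G$, so
\[
K_X+\Delta\sim\mu^*(C_1+D)-G,
\]
and $f_*\OO_X(K_X+\Delta)\simeq\pr_{2*}\bigl(\OO_Y(C_1+D)\otimes\II_p\bigr)$.

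\textbf{Step 1: the extension.} Use the residue sequence
\[
0\to\OO_X(K_X)\to\OO_X(K_X+\Delta)\to\OO_{C_1'}(K_{C_1'})\oplus\OO_{D'}(K_{D'})\to0.
\]
Here the two components separate because $C_1'\cap D'=\emptyset$, and both terms on the right are $\OO$ by adjunction on elliptic curves. Next I will identify the direct images of $\OO_X(K_X)$. The surface $X$ is birational over $E$ to $E\times E$, so $f_*\omega_X\simeq f_*\omega_{X/E}\otimes\omega_E\simeq\OO_E$, and $R^1f_*\omega_X\simeq\omega_E\simeq\OO_E$ by Grothendieck duality, or Kollár.

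Over each point the connecting map $\OO_E\oplus\OO_E\to R^1f_*\omega_X$ is the sum of residues, i.e.\ the fiberwise map $H^0(\omega_{C_1'\sqcup D'})\to H^1(\omega_F)$. It is therefore nonzero on each factor, hence surjective with kernel $\simeq\OO_E$. Fiberwise, $H^1(F,\omega_F(\Delta|_F))=0$ because $\Delta$ meets every fiber, including the reducible fiber over $0$, in its non-exceptional component. Hence $R^1f_*\OO_X(K_X+\Delta)=0$. This gives an extension
\[
0\to\OO_E\to f_*\OO_X(K_X+\Delta)\to\OO_E\to0 .
\]

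\textbf{Step 2: non-splitting, which is the main point.} I would show $h^0(X,K_X+\Delta)=1$; if the extension split, this number would be $2$. We have $h^0(X,K_X+\Delta)=h^0(Y,\OO_Y(C_1+D)\otimes\II_p)\le h^0(Y,\OO_Y(C_1+D))$. The divisor $C_1+D$ satisfies $(C_1+D)^2=2>0$ and is effective, so it is ample on the abelian surface (it meets every curve positively). Riemann--Roch with Kodaira vanishing then gives $h^0=(C_1+D)^2/2=1$. The unique member $C_1+D$ passes through $p$, so $h^0(X,K_X+\Delta)=1$. Hence the extension is non-split, and by Atiyah's classification $f_*\OO_X(K_X+\Delta)\simeq F_2$.

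I expect the delicate points to be the surjectivity of the residue map and the fiberwise vanishing over the reducible fiber. If these prove awkward, I will compute directly with $\pr_{2*}(\OO_Y(C_1+D)\otimes\II_p)$, using the restriction to $C_1$ and the fact that $D$ is relatively degree one.

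\textbf{Step 3: no Chen--Jiang decomposition.} On an elliptic curve, a quotient $E\to A_j$ is either an isomorphism or the map to a point. $M$-regularity on a curve means $V^1=\emptyset$, i.e.\ $\mathrm{IT}^0$. A nonzero sheaf with this property has $h^0=\chi>0$, hence degree $>0$ when it is torsion-free. So any decomposition of the rank-two, degree-zero bundle $F_2$ has summands that are either torsion line bundles or sheaves of positive degree. Degree zero forces all summands to be torsion line bundles, which would make $F_2$ decomposable. This contradicts the indecomposability of $F_2$.
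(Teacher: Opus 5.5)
Your argument is correct, but Step~1 takes a different route from the paper.

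The paper never uses $\omega_X$. It pushes forward $L\otimes\II_p$ and obtains the extension from the ideal sequence $0\to\OO_Y(-C_\Delta)\to\II_p\to i_*\OO_{C_\Delta}(-p)\to0$ twisted by $L$. Two inputs make this work: the diagonal has trivial normal bundle, so $L|_{C_\Delta}(-p)\simeq\OO_{C_\Delta}$; and $\pr_{2*}\OO_Y(C_0)\simeq\OO_E$ with $R^1\pr_{2*}\OO_Y(C_0)=0$.

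You instead use the residue sequence on $X$, relative duality $R^1f_*\omega_X\simeq\omega_E$, and the fiberwise trace. This is more conceptual. It identifies the subbundle as $f_*\omega_X$ and the quotient as the kernel of the trace. It also gives $R^1f_*\OO_X(K_X+\Delta)=\operatorname{coker}\delta=0$ at once, which the paper proves separately in Lemma~\ref{lem:vanishing}. The price is checking that the connecting map is compatible with base change. In fact you only need each component of $\delta$ to be nonzero at a general fiber, since a nonzero map $\OO_E\to\OO_E$ is an isomorphism.

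For non-splitting, the paper gets $h^0(L)=1$ from the automorphism $(x,y)\mapsto(x,y-x)$ and K\"unneth. Your argument via Nakai--Moishezon, Riemann--Roch and Kodaira vanishing is an equally short alternative. Step~3 is the paper's degree argument.

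One factual correction concerns the reducible fiber. Each of $C_1$ and $D$ meets $E\times\{0\}$ only at $p$, transversally. Hence their strict transforms meet $f^{-1}(0)=(E\times\{0\})'+G$ only in points of $G$, not in the non-exceptional component as you claim. The vanishing $H^1(F_0,\omega_{F_0}(\Delta|_{F_0}))=0$ still holds: $F_0$ is reduced and connected, so this group is dual to $H^0(F_0,\OO_{F_0}(-\Delta|_{F_0}))$, which is zero. In any case that sentence is redundant once $\delta$ is surjective.
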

\begin{proof}
Fix an elliptic curve $E$ with origin $0$ and set
\[
Y=E\times E,\qquad g=\pr_2:Y\longrightarrow E.
\]
The curves $C_0=\{0\}\times E$ and $C_\Delta=\Delta_E$ meet transversely at the unique point $p=(0,0)$. Let
\[
\mu:X=\Bl_p(Y)\longrightarrow Y
\]
be the blow-up, with exceptional curve $R$. Denote the strict transforms of $C_0$ and $C_\Delta$ by $D_0$ and $D_\Delta$, and put
\[
\Delta=D_0+D_\Delta,\qquad f=g\circ\mu:X\longrightarrow E.
\]
The curves $D_0$ and $D_\Delta$ are smooth and disjoint. Hence $(X,\Delta)$ is log smooth and therefore log canonical.

Set $L=\OO_Y(C_0+C_\Delta)$. Since $K_Y\sim0$,
\[
K_X=\mu^*K_Y+R\sim R,\qquad \mu^*C_0=D_0+R,\qquad \mu^*C_\Delta=D_\Delta+R.
\]
It follows that
\[
K_X+\Delta\sim\mu^*(C_0+C_\Delta)-R.
\]
Using $\mu_*\OO_X(-R)=\II_p$ and the projection formula, we obtain
\[
\mu_*\OO_X(K_X+\Delta)\simeq L\otimes\II_p.
\]
Thus, for $F=f_*\OO_X(K_X+\Delta)$,
\begin{equation}\label{eq:F}
F\simeq g_*(L\otimes\II_p).
\end{equation}
The inclusion $p\subset C_\Delta\subset Y$ gives $\II_{C_\Delta}\subset\II_p$, with quotient the ideal of $p$ in $C_\Delta$. If $i:C_\Delta\hookrightarrow Y$ denotes the inclusion, the resulting sequence is
\begin{equation}\label{eq:ideal}
0\longrightarrow\OO_Y(-C_\Delta)\longrightarrow\II_p\longrightarrow i_*\OO_{C_\Delta}(-p)\longrightarrow0.
\end{equation}
In completed local coordinates $x,y$ at $p$ with $\II_{C_\Delta}=(y)$ and $\II_p=(x,y)$, this quotient is
\[
(x,y)/(y)\simeq(x)\subset\CC[[x]],
\]
the ideal of the origin on $C_\Delta$. Tensoring \eqref{eq:ideal} by $L$ yields
\begin{equation}\label{eq:restriction}
0\longrightarrow\OO_Y(C_0)\longrightarrow L\otimes\II_p
\longrightarrow i_*\bigl(L|_{C_\Delta}\otimes\OO_{C_\Delta}(-p)\bigr)\longrightarrow0.
\end{equation}
Since $C_0$ meets $C_\Delta$ transversely at $p$, we have
\[
\OO_Y(C_0)|_{C_\Delta}\simeq\OO_{C_\Delta}(p).
\]
Along $C_\Delta\simeq E$, the differential of the diagonal embedding is $v\mapsto(v,v)$, so
\[
N_{C_\Delta/Y}\simeq(T_E\oplus T_E)/T_E^{\mathrm{diag}}\simeq T_E,
\]
where the last isomorphism is induced by $(u,v)\mapsto v-u$. A nonzero translation-invariant vector field trivializes $T_E$. Therefore
\[
\OO_Y(C_\Delta)|_{C_\Delta}\simeq N_{C_\Delta/Y}\simeq\OO_{C_\Delta},
\qquad L|_{C_\Delta}\otimes\OO_{C_\Delta}(-p)\simeq\OO_{C_\Delta}.
\]
Moreover, $\OO_Y(C_0)=\pr_1^*\OO_E(0)$ and $g=\pr_2$. The product formula gives
\[
Rg_*\pr_1^*\OO_E(0)\simeq R\Gamma(E,\OO_E(0))\otimes\OO_E.
\]
The degree-one line bundle $\OO_E(0)$ satisfies $h^0(E,\OO_E(0))=1$ and $h^1(E,\OO_E(0))=0$. Hence
\begin{equation}\label{eq:product}
g_*\OO_Y(C_0)\simeq\OO_E,\qquad R^1g_*\OO_Y(C_0)=0.
\end{equation}
Since $g|_{C_\Delta}$ is an isomorphism, pushing forward \eqref{eq:restriction} gives
\begin{equation}\label{eq:extension}
0\longrightarrow\OO_E\longrightarrow F\longrightarrow\OO_E\longrightarrow0.
\end{equation}
To determine this extension, consider the automorphism
\[
T:E\times E\longrightarrow E\times E,\qquad T(x,y)=(x,y-x).
\]
It sends $C_0$ to $\{0\}\times E$ and $C_\Delta$ to $E\times\{0\}$. Thus
\[
L\simeq T^*\bigl(\pr_1^*\OO_E(0)\otimes\pr_2^*\OO_E(0)\bigr),
\qquad h^0(Y,L)=h^0(E,\OO_E(0))^2=1.
\]
The unique section of $L$, up to scalar, has divisor $C_0+C_\Delta$ and vanishes at $p$. Consequently
\[
h^0(E,F)=h^0(Y,L\otimes\II_p)=1.
\]
If \eqref{eq:extension} were split, $F\simeq\OO_E^{\oplus2}$ would have two independent global sections. Since
\[
\Ext^1_E(\OO_E,\OO_E)\simeq H^1(E,\OO_E)\simeq\CC,
\]
we conclude that $F\simeq F_2$.

Finally, a quotient of $E$ with connected fibers is either $E$ itself or a point. A Chen--Jiang summand pulled back from $E$ is a torsion twist of a nonzero $M$-regular bundle and has positive degree. Since $\deg F_2=0$, all summands would have to come from a point. Then $F_2$ would be a direct sum of torsion line bundles, contrary to its indecomposability.
\end{proof}

\begin{remark}[Compatibility with generic vanishing]\label{rem:F2loci}
For the bundle $F_2$ above,
\[
V^0(E,F_2)=V^1(E,F_2)=\{\OO_E\}.
\]
Indeed, for every nontrivial $\alpha\in\Pic^0(E)$ one has $H^i(E,F_2\otimes\alpha)=0$ for $i=0,1$, while $h^0(E,F_2)=h^1(E,F_2)=1$. Thus $F_2$ is GV but not $M$-regular. The example is compatible with the generic-vanishing and torsion-translate properties of logarithmic pluricanonical direct images.
\end{remark}

\begin{corollary}\label{cor:torsion}
Let $E$ be an elliptic curve. There exist a smooth projective surface $X$, a reduced simple-normal-crossings divisor $\Delta$ on $X$, and a morphism $f:X\to E$ such that, for every torsion line bundle $\alpha\in\Pic^0(E)$, there is a Cartier divisor $D_\alpha$ on $X$ satisfying
\[
D_\alpha\simQ K_X+\Delta,\qquad f_*\OO_X(D_\alpha)\simeq F_2\otimes\alpha.
\]
In particular, $f_*\OO_X(D_\alpha)$ does not admit a Chen--Jiang decomposition.
\end{corollary}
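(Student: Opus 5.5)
We keep the surface $X=\Bl_p(E\times E)$, the boundary $\Delta=D_0+D_\Delta$ and the morphism $f=\pr_2\circ\mu$ from the proof of Theorem \ref{thm:lc}, and twist $K_X+\Delta$ by pullbacks of torsion line bundles from the base. Given a torsion $\alpha\in\Pic^0(E)$, choose a Cartier divisor $N_\alpha$ on $E$ with $\OO_E(N_\alpha)\simeq\alpha$. Since $\alpha$ is torsion, say $\alpha^{\otimes r}\simeq\OO_E$, we have $rN_\alpha\sim0$, so $N_\alpha\simQ0$. Put
\[
D_\alpha=K_X+\Delta+f^*N_\alpha .
\]
This is Cartier, and $D_\alpha\simQ K_X+\Delta$ because $f^*N_\alpha\simQ0$.

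The projection formula then gives
\[
f_*\OO_X(D_\alpha)\simeq f_*\OO_X(K_X+\Delta)\otimes\alpha\simeq F_2\otimes\alpha
\]
by Theorem \ref{thm:lc}. The only points to check are that $X$, $\Delta$ and $f$ do not depend on $\alpha$, and that the isomorphism $f_*\OO_X(K_X+\Delta)\simeq F_2$ is an isomorphism of sheaves, so that it survives tensoring with $\alpha$.

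Next we show that $F_2\otimes\alpha$ has no Chen--Jiang decomposition, following the last paragraph of the proof of Theorem \ref{thm:lc}. The bundle $F_2\otimes\alpha$ has degree $0$ and is indecomposable, since tensoring by the invertible sheaf $\alpha$ preserves indecomposability. The only quotients of $E$ with connected fibers are $E$ itself and a point. A summand pulled back from $E$ is $\beta\otimes\FF_j$ with $\beta$ torsion and $\FF_j$ a nonzero $M$-regular sheaf on the curve $E$.

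Such an $\FF_j$ has positive degree. Its torsion part contributes nonnegative degree. Its locally free part $V$ is a GV bundle on $E$ with $V^1(E,V)$ finite, hence of codimension $1$; by Serre duality this forces $V$ to have no degree-zero or negative quotient stable factors. So $\deg V\ge0$, with equality only if $V=0$; and if $V=0$ the torsion part is nonzero and has positive length. Therefore, if $F_2\otimes\alpha$ had a decomposition, the degree count would force every summand to come from a point. Then $F_2\otimes\alpha$ would be a direct sum of torsion line bundles, contradicting its indecomposability as a rank-two bundle.

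The main obstacle is this degree positivity for $M$-regular sheaves on $E$, which the proof of Theorem \ref{thm:lc} asserts without proof. I would justify it by Atiyah's classification, which writes a bundle as a sum of indecomposable bundles; an indecomposable degree-zero summand $W$ has $V^1(E,W)\ne\emptyset$, which contradicts $M$-regularity.
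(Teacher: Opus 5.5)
Your construction of $D_\alpha=K_X+\Delta+f^*N_\alpha$, the check that $D_\alpha\simQ K_X+\Delta$, and the projection-formula computation are exactly the paper's. Where you diverge is the non-decomposability step.

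The paper does not rerun any degree argument; it simply untwists. Suppose $F_2\otimes\alpha\simeq\bigoplus_j\beta_j\otimes p_j^*G_j$. Tensoring by $\alpha^{-1}$ gives $F_2\simeq\bigoplus_j(\beta_j\otimes\alpha^{-1})\otimes p_j^*G_j$. Since $\beta_j\otimes\alpha^{-1}$ is still torsion, this is a Chen--Jiang decomposition of $F_2$, contradicting Theorem~\ref{thm:lc}. That reduction is purely formal and uses the torsionness of $\alpha$ essentially. It also records the general principle that Chen--Jiang decomposability is invariant under torsion twists.

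You instead apply the degree count to $F_2\otimes\alpha$ directly. Your route is self-contained and works for any $\alpha\in\Pic^0(E)$. It also actually justifies the positivity of degree for $M$-regular sheaves on $E$, which the proof of Theorem~\ref{thm:lc} only asserts.

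One sentence in that justification needs correcting. On $E$ one has $\dim\Pic^0(E)=1$, so $M$-regularity ($\codim V^1>1$) means $V^1(E,V)=\emptyset$, not merely that $V^1(E,V)$ is finite. ``Finite, hence of codimension $1$'' is only the GV condition, and under it your conclusion that there are no degree-zero stable quotients fails. For example, $\OO_E$, or $F_2$ itself by Remark~\ref{rem:F2loci}, is GV of degree $0$.

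Your final paragraph does use the correct condition. A degree-zero indecomposable summand $W\simeq F_r\otimes\lambda$ has $\lambda^{-1}\in V^1(E,W)$. A negative-degree summand has $V^1=\Pic^0(E)$ by Riemann--Roch. So the argument goes through once that middle sentence is stated with $V^1(E,V)=\emptyset$.
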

\begin{proof}
Let $(X,\Delta)$ and $f:X\to E$ be the pair and the morphism constructed in Theorem \ref{thm:lc}. Thus
\[
f_*\OO_X(K_X+\Delta)\simeq F_2.
\]
Fix a torsion line bundle $\alpha\in\Pic^0(E)$. Since $E$ is smooth, we may choose a Cartier divisor $T_\alpha$ on $E$ such that
\[
\OO_E(T_\alpha)\simeq\alpha.
\]
Define a Cartier divisor on $X$ by
\[
D_\alpha:=K_X+\Delta+f^*T_\alpha.
\]
Here $K_X+\Delta$ is Cartier because $X$ is smooth and $\Delta$ is integral. No effectivity condition is required on $T_\alpha$ or $D_\alpha$.

We first check the asserted $\QQ$-linear equivalence. If $N>0$ is the order of $\alpha$, then $\alpha^{\otimes N}\simeq\OO_E$, so $NT_\alpha\sim0$. Pulling back this linear equivalence gives
\[
N\bigl(D_\alpha-(K_X+\Delta)\bigr)=f^*(NT_\alpha)\sim0.
\]
By the definition of $\QQ$-linear equivalence, this proves $D_\alpha\simQ K_X+\Delta$.

The associated line bundle satisfies
\[
\OO_X(D_\alpha)\simeq\OO_X(K_X+\Delta)\otimes f^*\alpha.
\]
Since $\alpha$ is locally free, the projection formula applies and yields
\begin{align*}
f_*\OO_X(D_\alpha)
&\simeq f_*\bigl(\OO_X(K_X+\Delta)\otimes f^*\alpha\bigr)\\
&\simeq f_*\OO_X(K_X+\Delta)\otimes\alpha\\
&\simeq F_2\otimes\alpha.
\end{align*}
We next show that this direct image has no Chen--Jiang decomposition. Suppose, to the contrary, that
\[
F_2\otimes\alpha\simeq\bigoplus_{j\in J}\beta_j\otimes p_j^*G_j,
\]
where $p_j:E\to A_j$ is a quotient morphism with connected fibers, $G_j$ is a nonzero $M$-regular coherent sheaf on $A_j$, and $\beta_j\in\Pic^0(E)$ is torsion. Tensoring this isomorphism by $\alpha^{-1}$ gives
\[
F_2\simeq\bigoplus_{j\in J}(\beta_j\otimes\alpha^{-1})\otimes p_j^*G_j.
\]
Each $\beta_j\otimes\alpha^{-1}$ is torsion, since both factors have finite order. The quotient maps $p_j$ and the $M$-regular sheaves $G_j$ are unchanged. This is therefore a Chen--Jiang decomposition of $F_2$, contradicting Theorem \ref{thm:lc}.
\end{proof}

\begin{remark}
Varying $\alpha$ gives infinitely many pairwise nonisomorphic direct images, since Remark \ref{rem:F2loci} yields $V^0(E,F_2\otimes\alpha)=\{\alpha^{-1}\}$. Consequently, the line bundles $\OO_X(D_\alpha)$ are also pairwise nonisomorphic.
\end{remark}

\begin{theorem}[Counterexamples for every $m\ge2$]\label{thm:pluri}
Let $E$ be an elliptic curve with origin $0$, and put
\[
A_0=E\times E,\qquad C_1=\{0\}\times E,\qquad C_2=E\times\{0\}.
\]
Let $p=(0,0)$, let $\mu:X=\Bl_p(A_0)\to A_0$ be the blow-up, and let $\Delta$ be the sum of the strict transforms of $C_1$ and $C_2$. Put $L=\OO_{A_0}(C_1+C_2)$. Then $(X,\Delta)$ is log smooth with reduced boundary, and for every integer $m\ge2$,
\[
\GG_m:=\mu_*\OO_X\bigl(m(K_X+\Delta)\bigr)\simeq L^m\otimes\II_p^m
\]
is GV but does not admit a Chen--Jiang decomposition.
\end{theorem}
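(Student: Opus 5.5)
The plan has three parts: compute $\GG_m$ on the blow-up, prove GV by reducing cohomology to a jet-evaluation problem on the product $E\times E$, and rule out a Chen--Jiang decomposition using rank one and non-local-freeness at $p$.

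\textbf{Step 1: the direct image.} Let $R$ be the exceptional curve. As in the proof of Theorem \ref{thm:lc}, the strict transforms $D_1,D_2$ of $C_1,C_2$ are disjoint smooth curves, so $(X,\Delta)$ is log smooth with reduced boundary. Since $K_{A_0}\sim0$, we have $K_X\sim R$ and $D_i=\mu^*C_i-R$, so
\[
m(K_X+\Delta)\sim\mu^*(mC_1+mC_2)-mR.
\]
Because $p$ is a smooth point, $\mu_*\OO_X(-mR)=\II_p^m$ (the power $\II_p^m$ is integrally closed). The projection formula then gives $\GG_m\simeq L^m\otimes\II_p^m$.

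\textbf{Step 2: the cohomology of $\GG_m\otimes\alpha$.} Fix $\alpha\in\Pic^0(A_0)$. The line bundle $L$ is the product principal polarization, so $L^m\otimes\alpha\simeq t_a^*L^m$ for some $a\in A_0$. It is ample with $h^0=m^2$ and no higher cohomology. Twisting $0\to\II_p^m\to\OO_{A_0}\to\OO_{A_0}/\II_p^m\to0$ by $L^m\otimes\alpha$ gives:
\begin{itemize}[leftmargin=2em]
\item $H^2(\GG_m\otimes\alpha)=0$;
\item $H^1(\GG_m\otimes\alpha)$ is the cokernel of the evaluation of $H^0(L^m\otimes\alpha)$ on $(m-1)$-jets at $p$, a space of dimension $m(m+1)/2$.
\end{itemize}
Translating, $\alpha\in V^1$ if and only if $H^0(L^m)\to L^m\otimes\OO/\mathfrak m_q^m$ fails to be surjective at the corresponding point $q=(q_1,q_2)$.

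\textbf{Step 3: GV.} Write $H^0(L^m)=H^0(E,\OO_E(m\cdot0))^{\otimes2}$. On $E$, the evaluation $H^0(\OO_E(m\cdot0))\to$ ($(m-1)$-jets at $q_1$) is a map between $m$-dimensional spaces. Its kernel is $H^0(\OO_E(m\cdot0-mq_1))$, so it is an isomorphism exactly when $mq_1\neq0$. When both $mq_1\ne0$ and $mq_2\ne0$, the tensor product surjects onto all bi-jets of order $\le m-1$ in each variable, hence onto the truncated jets of total degree $\le m-1$. Therefore $V^1$ lies in the finite union of curves $\{mq_1=0\}\cup\{mq_2=0\}$, which has codimension $\ge1$, and $V^2=\emptyset$. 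So $\GG_m$ is GV.

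\textbf{Step 4: not $M$-regular (main obstacle).} I must show $V^1$ really contains a curve, for instance the image of $\{q:\,mq_1=0\}$. Along this locus the one-variable map has rank $m-1$. I will identify the missing jet functional: it should kill the pure $x^{m-1}$ direction modulo lower terms. I then check that this functional survives on the truncated jet space after tensoring with the second factor. If the coordinate computation is awkward, the fallback is the restriction sequence to $C_1$ hinted at in the introduction. There I would compute $H^1$ of $\GG_m\otimes\alpha$ for $\alpha$ pulled back from the first factor and show it is nonzero along a one-dimensional family.

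\textbf{Step 5: no Chen--Jiang decomposition.} Since $\GG_m$ is torsion-free of rank one, any decomposition has a single summand $\beta\otimes p_1^*\FF_1$ with $\beta$ torsion, and $\FF_1$ an $M$-regular sheaf on a quotient $A_1$ of $A_0$. There are two cases.
\begin{itemize}[leftmargin=2em]
\item If $A_1=A_0$, then $\GG_m\simeq\beta\otimes\FF_1$ would be $M$-regular, since twisting by $\beta$ only translates the loci $V^i$. This contradicts Step 4.
\item If $\dim A_1\le1$, then $\FF_1$ is a rank-one sheaf on a curve or a point, and torsion-freeness of the pullback forces it to be a line bundle. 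Then $p_1^*\FF_1$ is locally free. But $\II_p^m$ needs $m+1\ge2$ local generators at $p$, so $\GG_m$ is not locally free there, a contradiction.
\end{itemize}
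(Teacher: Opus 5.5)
Steps 1, 2, 3 and 5 are sound, and Step 5 is the paper's argument. Step 3 reaches GV by a different route. The paper simply invokes generic vanishing for log canonical pairs (Popa--Schnell). Your jet computation is self-contained, and it also gives $V^2=\emptyset$ and places $V^1$ inside the image of $\{mq_1=0\}\cup\{mq_2=0\}$.

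The genuine gap is Step 4. It is the only input that excludes $\dim A_1=2$ in Step 5, and you give only a plan and a fallback. Nothing written shows that $V^1(A_0,\GG_m)$ contains a curve. Your description of the obstruction is also off. Let $\phi$ be the nonzero functional on $\CC[x]/(x^m)$ vanishing on the image $W_1$ of $H^0(E,\OO_E(m\cdot0))$. Then $\phi(x^{m-1})\neq0$, not zero: when $mq_1=0$, the unique section vanishing to order $m-1$ at $q_1$ vanishes to order $m$, so $W_1\cap\CC x^{m-1}=0$. This is why survival after truncation is not automatic. For instance, $\phi$ paired with a $y^j$-coefficient, $j\ge1$, does not descend, because $x^{m-1}y^j$ dies in $\OO/\mathfrak m_q^m$.

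The fix is short once you pair $\phi$ with the $y^0$-coefficient. Define
\[
\Phi\Bigl(\sum c_{ij}x^iy^j\Bigr)=\sum_i c_{i0}\,\phi(x^i)\qquad\text{on }\CC[x,y]/(x,y)^m .
\]
It is well defined, because it only reads the monomials $x^i$ with $i\le m-1$, which survive truncation. It is nonzero, since it restricts to $\phi$ on those monomials. It kills the image of $H^0(L^m)$, because that image comes from $W_1\otimes\CC[y]/(y^m)$. Geometrically, $\Phi$ is restriction to the horizontal curve $E\times\{q_2\}$ followed by $\phi$. Hence $H^1(\GG_m\otimes\alpha)\neq0$ along the whole image of $\{mq_1=0\}$, so $\codim V^1=1$ and $\GG_m$ is not $M$-regular.

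The paper uses your fallback, which is the same phenomenon with the factors swapped. It twists
\[
0\to\OO_{A_0}(-C_1)\otimes\II_p^{m-1}\to\II_p^m\to i_*\OO_{C_1}(-mp)\to0
\]
by $L^m\otimes\pr_1^*\beta$. Since $L|_{C_1}\simeq\OO_{C_1}(p)$ and $\pr_1^*\beta|_{C_1}$ is trivial, the quotient becomes $i_*\OO_{C_1}$. The kernel has $H^2=0$, because its $H^2$ equals that of a line bundle of positive degree on both factors (this uses $m\ge2$). Hence $H^1(\GG_m\otimes\pr_1^*\beta)\twoheadrightarrow H^1(C_1,\OO_{C_1})\simeq\CC$ for every $\beta$, so $\pr_1^*\Pic^0(E)\subset V^1$.
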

\begin{proof}
Let $R$ be the exceptional curve and $D_1,D_2$ the strict transforms of $C_1,C_2$. The curves $D_1,D_2$ are smooth and disjoint, so $\Delta=D_1+D_2$ is reduced SNC. Since $K_{A_0}\sim0$ and $\mu^*C_i=D_i+R$, we have
\[
K_X+\Delta\sim\mu^*(C_1+C_2)-R.
\]
Therefore
\[
m(K_X+\Delta)\sim\mu^*m(C_1+C_2)-mR.
\]
Using $\mu_*\OO_X(-mR)=\II_p^m$ and the projection formula gives
\[
\GG_m\simeq L^m\otimes\II_p^m.
\]
This sheaf is GV by generic vanishing for lc pairs \cite[Variant 5.5]{PS}.

Write $P=\OO_E(0)$, so $L=\pr_1^*P\otimes\pr_2^*P$. Let $i:C_1\hookrightarrow A_0$ be the inclusion. Restriction to $C_1$ gives the exact sequence
\begin{equation}\label{eq:residual}
0\longrightarrow\OO_{A_0}(-C_1)\otimes\II_p^{m-1}
\longrightarrow\II_p^m\longrightarrow i_*\OO_{C_1}(-mp)\longrightarrow0.
\end{equation}
Indeed, in local parameters $x,y$ at $p$ with $C_1=(x=0)$, the image of $(x,y)^m$ in $\CC[[y]]$ is $(y^m)$ and its kernel is $x(x,y)^{m-1}$.

Fix $\beta\in\Pic^0(E)$ and put $\alpha=\pr_1^*\beta$. Since $L|_{C_1}\simeq\OO_{C_1}(p)$ and $\alpha|_{C_1}\simeq\OO_{C_1}$, tensoring \eqref{eq:residual} by $L^m\otimes\alpha$ gives
\begin{equation}\label{eq:nonvanishingsequence}
0\longrightarrow\mathcal K_{m,\beta}\longrightarrow\GG_m\otimes\alpha
\longrightarrow i_*\OO_{C_1}\longrightarrow0,
\end{equation}
where
\[
\mathcal K_{m,\beta}=
\bigl(\pr_1^*(P^{m-1}\otimes\beta)\otimes\pr_2^*P^m\bigr)\otimes\II_p^{m-1}.
\]
Set $N_{m,\beta}=\pr_1^*(P^{m-1}\otimes\beta)\otimes\pr_2^*P^m$. From
\[
0\longrightarrow\mathcal K_{m,\beta}\longrightarrow N_{m,\beta}
\longrightarrow N_{m,\beta}\otimes(\OO_{A_0}/\II_p^{m-1})\longrightarrow0
\]
and the zero-dimensional support of the last term, we obtain
\[
H^2(A_0,\mathcal K_{m,\beta})\simeq H^2(A_0,N_{m,\beta})=0.
\]
The last equality follows from K\"unneth, since both factors of $N_{m,\beta}$ have positive degree. The cohomology sequence of \eqref{eq:nonvanishingsequence} now gives a surjection
\[
H^1(A_0,\GG_m\otimes\pr_1^*\beta)\twoheadrightarrow
H^1(C_1,\OO_{C_1})\simeq\CC.
\]
Consequently
\[
\pr_1^*\Pic^0(E)\subset V^1(A_0,\GG_m).
\]
This is a codimension-one subvariety of $\Pic^0(A_0)$, so $\GG_m$ is not $M$-regular.

Suppose that $\GG_m$ admitted a Chen--Jiang decomposition. Since it is torsion-free of rank one, the decomposition would have a single nonzero summand:
\[
\GG_m\simeq\gamma\otimes q^*\mathcal M,
\]
where $q:A_0\to B$ has connected fibers, $\gamma$ is torsion, and $\mathcal M$ is $M$-regular. Flatness of $q$ shows that $\mathcal M$ is torsion-free of rank one. If $\dim B=2$, then $q$ is an isomorphism, so $\GG_m$ would be $M$-regular. If $\dim B\le1$, then $\mathcal M$ is locally free and hence so is $q^*\mathcal M$. This is also impossible: at $p$, the ideal $\II_p^m$ is $(x,y)^m$, which has $m+1$ generators and is not locally free of rank one. Thus $\GG_m$ has no Chen--Jiang decomposition.
\end{proof}

\begin{remark}\label{rem:plurisubsheaf}
For the pair and morphism of Theorem \ref{thm:pluri}, put
\[
\FF_m=\mu_*\OO_X\bigl(m(K_X+\Delta)-\Delta\bigr).
\]
Since $\Delta=\mu^*(C_1+C_2)-2R$,
\[
m(K_X+\Delta)-\Delta\sim\mu^*(m-1)(C_1+C_2)-(m-2)R,
\]
and therefore
\[
\FF_m\simeq L^{m-1}\otimes\II_p^{m-2},\qquad \II_p^0=\OO_{A_0}.
\]
This is the subsheaf of Proposition \ref{prop:subsheaf}. It satisfies $\mathrm{IT}^0$.

To verify the latter assertion, for integers $a,b>k\ge0$ set
\[
\mathcal H_{a,b,k}=\bigl(\pr_1^*P^a\otimes\pr_2^*P^b\bigr)\otimes\II_p^k.
\]
For $k\ge1$, the restriction sequence \eqref{eq:residual} gives
\[
0\longrightarrow\mathcal H_{a-1,b,k-1}\longrightarrow\mathcal H_{a,b,k}
\longrightarrow i_*P^{b-k}\longrightarrow0,
\]
where $C_1$ is identified with $E$. The sheaves $\mathcal H_{a,b,0}$ satisfy $\mathrm{IT}^0$ by K\"unneth. Since $b-k>0$, the same holds for $i_*P^{b-k}$. After tensoring the sequence by an arbitrary $\alpha\in\Pic^0(A_0)$, induction on $k$ gives $\mathrm{IT}^0$ for $\mathcal H_{a,b,k}$. Taking $a=b=m-1$ and $k=m-2$ proves the assertion for $\FF_m$.
\end{remark}

\section{Higher direct images and higher-dimensional bases}
Let $h:X\to E$ denote the morphism $f$ constructed in Theorem \ref{thm:lc}, and put $L=\OO_X(K_X+\Delta)$. Thus
\[
X=\Bl_{(0,0)}(E\times E),\qquad h=\pr_2\circ\mu,\qquad h_*L\simeq F_2.
\]
We use products with abelian varieties and the following vanishing.

\begin{lemma}\label{lem:vanishing}
Let $h:X\to E$ be the morphism of Theorem \ref{thm:lc}, where $X=\Bl_p(E\times E)$, $p=(0,0)$, and $\Delta$ is the sum of the strict transforms of $\{0\}\times E$ and $\Delta_E$. Then
\[
h_*\OO_X(K_X+\Delta)\simeq F_2,\qquad R^qh_*\OO_X(K_X+\Delta)=0\quad(q>0).
\]
\end{lemma}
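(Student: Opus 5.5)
The first assertion is already Theorem \ref{thm:lc}, so only the vanishing of $R^qh_*\OO_X(K_X+\Delta)$ for $q>0$ needs proof. Since $h$ has one-dimensional fibers, only $q=1$ matters. I will factor $h=g\circ\mu$ with $g=\pr_2$ and use the Leray spectral sequence for the composition.

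\emph{Step 1: reduce to the abelian surface.} From the proof of Theorem \ref{thm:lc},
\[
K_X+\Delta\sim\mu^*(C_0+C_\Delta)-R,
\]
where $L_Y=\OO_Y(C_0+C_\Delta)$ on $Y=E\times E$. The projection formula gives
\[
R^q\mu_*\OO_X(K_X+\Delta)\simeq L_Y\otimes R^q\mu_*\OO_X(-R).
\]
We have $\mu_*\OO_X(-R)=\II_p$. For the higher direct image, use $0\to\OO_X(-R)\to\OO_X\to\OO_R\to0$. Here $R^1\mu_*\OO_X=0$, because $Y$ is smooth and so has rational singularities. The map $\mu_*\OO_X=\OO_Y\to\mu_*\OO_R=\CC_p$ is surjective. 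Hence $R^1\mu_*\OO_X(-R)=0$. Alternatively, $R^1\mu_*\OO_X(K_X+\Delta)=0$ follows from relative Kawamata--Viehweg vanishing, since $\Delta$ is $\mu$-nef... but the direct computation suffices. The Leray spectral sequence then gives
\[
R^qh_*\OO_X(K_X+\Delta)\simeq R^qg_*(L_Y\otimes\II_p).
\]

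\emph{Step 2: compute $R^1g_*$ using sequence \eqref{eq:restriction}.} Push forward \eqref{eq:restriction} along $g$. The left term has $R^1g_*\OO_Y(C_0)=0$ by \eqref{eq:product}. The right term is $i_*\OO_{C_\Delta}$, and $g|_{C_\Delta}$ is an isomorphism, hence finite. Therefore $R^1g_*$ of the right term vanishes. The long exact sequence of $g_*$ then forces $R^1g_*(L_Y\otimes\II_p)=0$. Since the fibers of $g$ are curves, $R^qg_*=0$ for $q\ge2$.

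The main point to check carefully is Step 1, namely the vanishing of $R^1\mu_*\OO_X(-R)$. This is standard for a point blow-up on a smooth surface. One can also see it from $H^1(R,\OO_R(1))=0$ on the formal fibers via the theorem on formal functions, applied to the sheaves $\OO_R(k)$ with $k\ge1$. Everything else follows directly from sequences already established in the proof of Theorem \ref{thm:lc}.
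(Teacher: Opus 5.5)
Your proposal is correct and follows essentially the same route as the paper. You obtain $R\mu_*\OO_X(K_X+\Delta)\simeq L_Y\otimes\II_p$, concentrated in degree zero, from the sequence $0\to\OO_X(-R)\to\OO_X\to\OO_R\to0$, and then push the restriction sequence \eqref{eq:restriction} forward along $g$, using \eqref{eq:product} and the fact that $g|_{C_\Delta}$ is an isomorphism. Your relative Kawamata--Viehweg aside is also valid, since $\Delta\cdot R=2>0$ makes $\Delta$ $\mu$-ample, but it is not needed.
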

\begin{proof}
Write $Y=E\times E$, $g=\pr_2$, and $h=g\circ\mu$. Put
\[
L_Y=\OO_Y(C_0+C_\Delta),\qquad M=\OO_X(K_X+\Delta).
\]
By the computation in Theorem \ref{thm:lc},
\[
M\simeq\mu^*L_Y\otimes\OO_X(-R).
\]
We first recall the elementary direct-image calculation for the blow-up $\mu:X\to Y$ at the smooth point $p$. The exceptional curve is $R\simeq\PP^1$, and there is an exact sequence
\[
0\longrightarrow\OO_X(-R)\longrightarrow\OO_X\longrightarrow\OO_R\longrightarrow0.
\]
For a blow-up of a smooth point one has
\[
\mu_*\OO_X=\OO_Y,\qquad R^1\mu_*\OO_X=0.
\]
Also $\mu_*\OO_R\simeq k(p)$ and $R^1\mu_*\OO_R=0$, because $R\simeq\PP^1$ and $H^1(\PP^1,\OO_{\PP^1})=0$. Pushing forward the exact sequence therefore gives
\[
0\longrightarrow\mu_*\OO_X(-R)\longrightarrow\OO_Y\longrightarrow k(p)
\longrightarrow R^1\mu_*\OO_X(-R)\longrightarrow0.
\]
The map $\OO_Y\to k(p)$ is evaluation at $p$, hence it is surjective and its kernel is $\II_p$. Thus
\[
\mu_*\OO_X(-R)=\II_p,\qquad R^1\mu_*\OO_X(-R)=0.
\]
There are no higher direct images for $q\ge2$, since the fibers of $\mu$ have dimension at most one. By the projection formula,
\[
R\mu_*M\simeq L_Y\otimes\II_p
\]
concentrated in degree zero. Consequently
\[
R^qh_*M\simeq R^qg_*(L_Y\otimes\II_p)\qquad(q\ge0).
\]
Now use the exact sequence already obtained in \eqref{eq:restriction}:
\[
0\longrightarrow\OO_Y(C_0)\longrightarrow L_Y\otimes\II_p\longrightarrow i_*\OO_{C_\Delta}\longrightarrow0.
\]
By \eqref{eq:product},
\[
R^qg_*\OO_Y(C_0)=0\qquad(q>0).
\]
Moreover, $g\circ i:C_\Delta\to E$ is an isomorphism, so
\[
R^qg_*i_*\OO_{C_\Delta}=0\qquad(q>0).
\]
The long exact sequence of higher direct images now gives
\[
R^qg_*(L_Y\otimes\II_p)=0\qquad(q>0).
\]
Hence $R^qh_*M=0$ for all $q>0$. The equality $h_*M\simeq F_2$ is exactly Theorem \ref{thm:lc}.
\end{proof}

\begin{theorem}[Higher direct images over an elliptic curve]\label{thm:higher}
Let $E$ be an elliptic curve and let $r\ge0$ be an integer. There exist a smooth projective variety $X_r$ of dimension $r+2$, a reduced SNC divisor $\Delta_r$ on $X_r$, and a morphism $f_r:X_r\to E$ such that
\[
R^jf_{r*}\OO_{X_r}(K_{X_r}+\Delta_r)\simeq F_2^{\oplus\binom rj}\qquad(0\le j\le r).
\]
None of these sheaves admits a Chen--Jiang decomposition. The higher direct images vanish for $j>r$.
\end{theorem}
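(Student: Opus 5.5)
We take $X_r=X\times T$ with $T$ an abelian variety of dimension $r$ (for $r=0$, $T$ is a point), $\Delta_r=\pr_X^*\Delta$, and $f_r=h\circ\pr_X:X_r\to E$, where $(X,\Delta)$ and $h$ are as in Lemma \ref{lem:vanishing}. Then $X_r$ is smooth projective of dimension $r+2$. The divisor $\Delta_r=D_0\times T+D_\Delta\times T$ is a disjoint union of smooth divisors, hence reduced SNC. Since $K_T\sim0$, we have
\[
K_{X_r}+\Delta_r\sim\pr_X^*(K_X+\Delta),
\]
so $\OO_{X_r}(K_{X_r}+\Delta_r)\simeq\pr_X^*M$ with $M=\OO_X(K_X+\Delta)$.

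To compute the higher direct images, we factor $f_r=h\circ\pr_X$ and use the Leray spectral sequence, or equivalently $Rf_{r*}=Rh_*\circ R\pr_{X*}$. Flat base change (the K\"unneth formula for the projection $X\times T\to X$) gives
\[
R\pr_{X*}\pr_X^*M\simeq M\otimes R\Gamma(T,\OO_T),
\]
and $R\Gamma(T,\OO_T)\simeq\bigoplus_j\wedge^jH^1(T,\OO_T)[-j]$, where $h^j(T,\OO_T)=\binom rj$. Hence
\[
Rf_{r*}\OO_{X_r}(K_{X_r}+\Delta_r)\simeq\bigoplus_{j}Rh_*M\otimes H^j(T,\OO_T)[-j].
\]
By Lemma \ref{lem:vanishing}, $Rh_*M\simeq F_2$ is concentrated in degree zero. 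Taking cohomology sheaves gives $R^jf_{r*}\simeq F_2^{\oplus\binom rj}$ for $0\le j\le r$, and $0$ for $j>r$. The only point requiring care is that the splitting holds in the derived category. This is automatic, because $R\Gamma(T,\OO_T)$ is a complex of vector spaces, so it is quasi-isomorphic to the direct sum of its cohomology groups.

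The main step is showing that $F_2^{\oplus n}$ with $n=\binom rj\ge1$ has no Chen--Jiang decomposition; this is where the argument needs more than the rank-two case. We adapt the final paragraph of the proof of Theorem \ref{thm:lc}. The only quotients of $E$ with connected fibers are $E$ itself and a point. A summand pulled back from $E$ is a torsion twist of a nonzero $M$-regular sheaf on $E$. Such a sheaf has vanishing $H^1$ after a general twist and nonzero Euler characteristic, and therefore positive degree; torsion parts would also contribute positive length. Since $F_2^{\oplus n}$ is locally free of degree zero, every summand must come from a point, i.e. be a torsion line bundle. This would make $F_2^{\oplus n}$ a direct sum of line bundles. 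Krull--Schmidt (Atiyah) on the elliptic curve contradicts this, since $F_2$ is indecomposable of rank two. Alternatively, without Krull--Schmidt, such a sum of line bundles would satisfy $h^0=h^1$ at $\OO_E$ equal to the number of trivial summands, which is $2n$, while $h^0(F_2^{\oplus n})=n$.
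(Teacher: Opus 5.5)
Your proof is correct and follows the paper's route. You use the same product $X\times T$ with $\Delta_r=\pr_X^*\Delta$, the same K\"unneth/Leray computation reducing to Lemma \ref{lem:vanishing}, and the same degree argument forcing every Chen--Jiang summand to be a torsion line bundle; your appeal to Atiyah's Krull--Schmidt theorem is then a valid substitute for the paper's $V^0$/$h^0$ count. Your ``alternative'' finish, however, asserts without justification that all $2n$ summands are trivial. The paper supplies this via $V^0(E,F_2^{\oplus n})=\{\OO_E\}$: a nontrivial summand $\alpha_i$ would force $H^0(E,F_2\otimes\alpha_i^{-1})\neq0$, which fails for $\alpha_i\neq\OO_E$.
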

\begin{proof}
Choose an abelian variety $A$ of dimension $r$ and set
\[
X_r=X\times A,\qquad\Delta_r=\pr_X^*\Delta,\qquad f_r=h\circ\pr_X.
\]
The pair is log smooth, and $K_A\sim0$ gives
\[
\OO_{X_r}(K_{X_r}+\Delta_r)\simeq\pr_X^*L,\qquad
R^q\pr_{X*}\pr_X^*L\simeq L\otimes H^q(A,\OO_A).
\]
The Leray spectral sequence has terms
\[
E_2^{p,q}=R^ph_*L\otimes H^q(A,\OO_A)
\ \Longrightarrow\ R^{p+q}f_{r*}\OO_{X_r}(K_{X_r}+\Delta_r).
\]
Lemma \ref{lem:vanishing} gives $E_2^{p,q}=0$ for $p>0$. Therefore
\[
R^jf_{r*}\OO_{X_r}(K_{X_r}+\Delta_r)
\simeq F_2\otimes H^j(A,\OO_A)
\simeq F_2\otimes\bigwedge^jH^1(A,\OO_A)
\simeq F_2^{\oplus\binom rj}.
\]
In particular, these sheaves do not admit a Chen--Jiang decomposition. Indeed, let $N>0$ and set $G=F_2^{\oplus N}$. Suppose that $G$ admitted such a decomposition. Since $\deg G=0$, the same degree argument as in Theorem \ref{thm:lc} shows that every summand must come from a point; thus $G$ would be a direct sum of torsion line bundles on $E$. On the other hand, Remark \ref{rem:F2loci} gives
\[
V^0(E,G)=\{\OO_E\}.
\]
For a direct sum of torsion line bundles $\bigoplus_i\alpha_i$, the set $V^0$ is precisely the set of points $\{\alpha_i^{-1}\}$. Hence every $\alpha_i$ must be trivial. Since $\rk G=2N$, this would give $G\simeq\OO_E^{\oplus2N}$ and therefore $h^0(E,G)=2N$. But $h^0(E,F_2)=1$, so $h^0(E,G)=N$, a contradiction. Thus no such decomposition exists. The vanishing for $j>r$ follows from $H^j(A,\OO_A)=0$ for $j>r$.
\end{proof}

\begin{corollary}
For every integer $n\ge2$ and every $0\le j\le n-2$, there exist a smooth projective variety $X_n$ of dimension $n$, a reduced SNC divisor $\Delta_n$, and a morphism $f_n:X_n\to E$ for which
\[
R^jf_{n*}\OO_{X_n}(K_{X_n}+\Delta_n)
\]
does not admit a Chen--Jiang decomposition.
\end{corollary}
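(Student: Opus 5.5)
This is a direct reindexing of Theorem \ref{thm:higher}. Given $n\ge2$ and $0\le j\le n-2$, I set $r=n-2\ge0$ and take $X_n=X_r$, $\Delta_n=\Delta_r$, $f_n=f_r$ from that theorem. Concretely, $X_n=X\times A$ with $A$ an abelian variety of dimension $n-2$, $\Delta_n=\pr_X^*\Delta$, and $f_n=h\circ\pr_X$.

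First I check the hypotheses. $X_n$ is smooth projective of dimension $r+2=n$. $\Delta_n$ is a reduced SNC divisor, because it is the pullback of the reduced SNC divisor $\Delta=D_0+D_\Delta$ under the smooth projection $\pr_X$. Its components are $D_0\times A$ and $D_\Delta\times A$, which are smooth and disjoint.

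Next, Theorem \ref{thm:higher} gives, for $0\le j\le r=n-2$,
\[
R^jf_{n*}\OO_{X_n}(K_{X_n}+\Delta_n)\simeq F_2^{\oplus\binom{n-2}{j}}.
\]
Since $0\le j\le n-2$, the binomial coefficient $\binom{n-2}{j}$ is at least $1$, so this sheaf is nonzero. It therefore has the form $F_2^{\oplus N}$ with $N\ge1$.

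The concluding argument in the proof of Theorem \ref{thm:higher} shows that no such $F_2^{\oplus N}$ admits a Chen--Jiang decomposition. That argument uses the degree, the set $V^0$, and $h^0(E,F_2)=1$. This gives the claim.

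There is no real obstacle here. The only point to watch is that $j$ lies in the range where the binomial coefficient is positive, so that the higher direct image is a nonzero power of $F_2$ rather than $0$; the zero sheaf would trivially admit the empty decomposition.
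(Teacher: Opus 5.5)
Your proposal is correct and is the paper's argument: the corollary follows by applying Theorem~\ref{thm:higher} with $r=n-2$. Your extra checks are also correct and only make explicit what the paper leaves implicit, namely that $\Delta_n$ is reduced SNC and that $\binom{n-2}{j}\ge1$, so the sheaf is a nonzero power of $F_2$.
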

\begin{proof}
Apply Theorem \ref{thm:higher} with $r=n-2$.
\end{proof}

\begin{theorem}[Higher-dimensional bases]\label{thm:base}
Let $E$ be an elliptic curve and let $A$ be an abelian variety of dimension $a$. For every integer $b\ge0$, there exist a smooth projective variety $W$ of dimension $a+b+2$, a reduced SNC divisor $\Delta_W$ on $W$, and a morphism $\varphi:W\to E\times A$ such that
\[
R^q\varphi_*\OO_W(K_W+\Delta_W)\simeq(p^*F_2)^{\oplus\binom bq}\qquad(0\le q\le b),
\]
where $p:E\times A\to E$ is the projection. None of these sheaves admits a Chen--Jiang decomposition, and the higher direct images vanish for $q>b$.
\end{theorem}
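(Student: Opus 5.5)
We take a product construction. Let $h:X\to E$ and $(X,\Delta)$ be as in Lemma \ref{lem:vanishing}, choose an abelian variety $T$ of dimension $b$, and set
\[
W=X\times A\times T,\qquad \Delta_W=\pr_X^*\Delta,\qquad \varphi(x,a,t)=(h(x),a).
\]
Then $W$ is smooth of dimension $a+b+2$, and $\Delta_W$ is reduced SNC because it is pulled back under a smooth projection. Since $K_A\sim0$ and $K_T\sim0$, we have $\OO_W(K_W+\Delta_W)\simeq\pr_X^*\OO_X(K_X+\Delta)$. The morphism $\varphi$ factors as $W\xrightarrow{\ \sigma\ }X\times A\xrightarrow{\ h\times\mathrm{id}_A\ }E\times A$, where $\sigma$ is the projection with fiber $T$.

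\textbf{Cohomology computation.} For $\sigma$, flat base change (or Künneth) gives
\[
R^q\sigma_*\pr_X^*M\simeq\pr_X^*M\otimes H^q(T,\OO_T),\qquad M=\OO_X(K_X+\Delta).
\]
For $h\times\mathrm{id}_A$, flat base change along the projection $E\times A\to E$ together with Lemma \ref{lem:vanishing} gives
\[
R^p(h\times\mathrm{id}_A)_*\pr_X^*M\simeq p^*R^ph_*M,
\]
which is $p^*F_2$ for $p=0$ and $0$ for $p>0$. The Leray spectral sequence for the composition therefore degenerates. It yields $R^q\varphi_*\OO_W(K_W+\Delta_W)\simeq p^*F_2\otimes H^q(T,\OO_T)\simeq(p^*F_2)^{\oplus\binom bq}$, and this vanishes for $q>b$. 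All of this mirrors Theorem \ref{thm:higher}.

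\textbf{Non-decomposability.} This is the main obstacle. Put $G=(p^*F_2)^{\oplus N}$ with $N\ge1$, and suppose
\[
G\simeq\bigoplus_j\alpha_j\otimes p_j^*\FF_j.
\]
The plan is to reduce to Theorem \ref{thm:higher} by restricting to the slice $E\times\{a\}$. Each $p_j$ restricts to a homomorphism $E\to A_j$ composed with a translation. Its image is either a point or an elliptic curve $E'$ isogenous to $E$ (after Stein factorization). It is cleaner, though, to argue via cohomology support loci, using the Fourier--Mukai characterization from \cite{PPS}: a sheaf with a Chen--Jiang decomposition has $V^0$ equal to a union of torsion translates of the subtori $\widehat{A_j}$, with the $M$-regular pieces detected by dimension.

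By Künneth and Remark \ref{rem:F2loci}, $V^i(G)$ is computed on $\widehat E\times\widehat A$. Since $H^i(E\times A,\,p^*F_2\otimes(\beta\boxtimes\gamma))$ vanishes unless $\beta$ and $\gamma$ are both trivial, we get
\[
V^0(G)=\{0\},
\]
a single point. Consequently every summand must have $A_j=0$, i.e.\ each $p_j$ is the map to a point. Indeed, for $\dim A_j>0$, an $M$-regular $\FF_j$ has $V^0(\FF_j)=\widehat A_j$, and then $V^0$ of the summand is a positive-dimensional torsion translate of $\widehat{A_j}$. Hence $G$ would be a sum of torsion line bundles, all trivial because $V^0(G)=\{0\}$, so $G\simeq\OO^{\oplus2N}$. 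But $h^0(G)=N\cdot h^0(E,F_2)\cdot h^0(A,\OO_A)=N<2N$, a contradiction.

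The delicate point to verify is that $V^0(\FF_j)=\widehat A_j$ for nonzero $M$-regular $\FF_j$. This holds because $M$-regular sheaves are continuously globally generated, hence have nonzero $h^0$ generically, and $V^0$ is closed.
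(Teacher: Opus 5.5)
Your proposal is correct and follows essentially the same route as the paper. Both use the product $W=X\times A\times T$, compute the direct images by flat base change, Lemma~\ref{lem:vanishing} and Leray, and then rule out a Chen--Jiang decomposition via $V^0(G)=\{\OO_{E\times A}\}$ and $h^0(G)=N<2N=\rk G$. The only difference is that you prove directly, via continuous global generation of nonzero $M$-regular sheaves, the support-locus fact that the paper cites from \cite[Lemma 3.3]{LPS}.
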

\begin{proof}
Choose an abelian variety $B$ of dimension $b$. Put
\[
W=X\times A\times B,\qquad\Delta_W=\pr_X^*\Delta,
\qquad\varphi(x,u,v)=(h(x),u).
\]
Let $\rho:W\to X\times A$ and $s:X\times A\to X$ be the projections, and set $h_A=h\times\mathrm{id}_A$. Then $\varphi=h_A\circ\rho$ and
\[
\OO_W(K_W+\Delta_W)\simeq\rho^*s^*L.
\]
Flat base change in the Cartesian square defined by $h$ and $p$ gives
\[
R^ih_{A*}s^*L\simeq p^*R^ih_*L=
\begin{cases}p^*F_2,&i=0,\\0,&i>0.\end{cases}
\]
Moreover,
\[
R^q\rho_*\rho^*s^*L\simeq s^*L\otimes H^q(B,\OO_B).
\]
Leray now yields
\[
R^q\varphi_*\OO_W(K_W+\Delta_W)
\simeq p^*F_2\otimes H^q(B,\OO_B)
\simeq(p^*F_2)^{\oplus\binom bq}.
\]
In particular, these sheaves do not admit a Chen--Jiang decomposition. To see this, set $D=E\times A$ and $G=(p^*F_2)^{\oplus N}$ for $N>0$. For $(\alpha,\beta)\in\Pic^0(E)\times\Pic^0(A)$, K\"unneth gives
\[
H^0\bigl(D,G\otimes(\alpha\boxtimes\beta)\bigr)
\simeq\bigl(H^0(E,F_2\otimes\alpha)\otimes H^0(A,\beta)\bigr)^{\oplus N}.
\]
Thus
\[
V^0(D,G)=\{\OO_D\},\qquad h^0(D,G)=N,\qquad\rk G=2N.
\]
If
\[
G\simeq\bigoplus_i\alpha_i\otimes p_i^*G_i
\]
were a Chen--Jiang decomposition, then by \cite[Lemma 3.3]{LPS}
\[
\{\OO_D\}=V^0(D,G)=\bigcup_i\alpha_i^{-1}\otimes p_i^*\Pic^0(D_i).
\]
Each $D_i$ must be a point and each $\alpha_i$ trivial. Hence $G\simeq\OO_D^{\oplus2N}$, contradicting $h^0(D,G)=N$. The vanishing for $q>b$ follows from $H^q(B,\OO_B)=0$ for $q>b$.
\end{proof}

\begin{corollary}\label{cor:all}
For every $d\ge1$ and $j\ge0$, there exist a projective log-smooth pair $(W,\Delta_W)$ with reduced boundary and $\dim W=d+j+1$, an abelian variety $B_d$ of dimension $d$, and a morphism $\varphi:W\to B_d$ such that
\[
R^j\varphi_*\OO_W(K_W+\Delta_W)\simeq p^*F_2
\]
for a quotient $p:B_d\to E$. In particular, this sheaf does not admit a Chen--Jiang decomposition.
\end{corollary}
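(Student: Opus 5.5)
The plan is to read off the corollary from Theorem \ref{thm:base} by choosing its parameters $a$ and $b$ appropriately. Given $d\ge1$ and $j\ge0$, take $a=d-1$ and $b=j$. If $d=1$, let $A$ be a point; otherwise let $A$ be any abelian variety of dimension $d-1$. Set $B_d=E\times A$, which is an abelian variety of dimension $d$, and let $p:B_d\to E$ be the first projection. This is a surjective homomorphism with connected fibers, so it is a quotient onto the elliptic curve $E$. Theorem \ref{thm:base} then produces a smooth projective $W$ with
\[
\dim W=a+b+2=(d-1)+j+2=d+j+1,
\]
a reduced SNC divisor $\Delta_W$ on $W$, and a morphism $\varphi:W\to B_d$. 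It gives
\[
R^q\varphi_*\OO_W(K_W+\Delta_W)\simeq(p^*F_2)^{\oplus\binom bq}.
\]
Setting $q=j=b$ gives $\binom bb=1$, hence $R^j\varphi_*\OO_W(K_W+\Delta_W)\simeq p^*F_2$.

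Three small points need checking. First, $(W,\Delta_W)$ is log smooth with reduced boundary. This holds because $W=X\times A\times B$ is smooth and $\Delta_W=\pr_X^*\Delta$ is the pullback of a reduced SNC divisor along a smooth projection, so it is again reduced SNC. Second, the degenerate case $a=0$ is covered. Theorem \ref{thm:base} allows $\dim A=0$, and the flat base change and Künneth arguments in its proof go through verbatim when $A$ is a point. Alternatively, for $d=1$ one can invoke Theorem \ref{thm:higher} with $r=j$, since $\binom jj=1$. Third, the non-existence of a Chen--Jiang decomposition is the case $N=1$ of the argument already given in Theorem \ref{thm:base}. There $V^0(B_d,p^*F_2)=\{\OO_{B_d}\}$ forces every summand to be a trivial line bundle pulled back from a point. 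That would give $h^0=2$, whereas in fact $h^0(B_d,p^*F_2)=h^0(E,F_2)=1$.

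I expect no real obstacle. The only step requiring care is matching the indices: the degree $j$ of the higher direct image must equal $b=\dim B$, so that the binomial multiplicity is exactly one. One should also confirm that $\dim A=0$ is permitted when $d=1$.
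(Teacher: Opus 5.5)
Your proposal is correct and is the paper's proof: it takes $\dim A=d-1$ and $\dim B=j$ in Theorem \ref{thm:base} and uses $h^j(B,\OO_B)=\binom{j}{j}=1$ to obtain $R^j\varphi_*\OO_W(K_W+\Delta_W)\simeq p^*F_2$ with $\dim W=d+j+1$. The non-decomposability then follows from the $N=1$ case of that theorem's argument, as you say.
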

\begin{proof}
Take $\dim A=d-1$ and $\dim B=j$ in Theorem \ref{thm:base}. Then
\[
B_d=E\times A,\qquad\dim W=2+(d-1)+j=d+j+1,\qquad h^j(B,\OO_B)=1.
\]
The assertion follows from the formula in that theorem.
\end{proof}

\begin{theorem}[Higher-dimensional examples for $m\ge2$]\label{thm:pluriproduct}
Let $\mu:X'\to A_0=E\times E$ and $(X',\Delta')$ be the construction of Theorem \ref{thm:pluri}. For every $d\ge2$ and $r\ge0$, choose abelian varieties $C$ and $T$ with $\dim C=d-2$ and $\dim T=r$, and set
\[
B_d=A_0\times C,\qquad W=X'\times C\times T,\qquad\Delta_W=\pr_{X'}^*\Delta'.
\]
Let $p:B_d\to A_0$ be the projection and define
\[
\varphi:W\longrightarrow B_d,\qquad\varphi(x,c,t)=(\mu(x),c).
\]
Then $(W,\Delta_W)$ is log smooth with reduced boundary, $\dim W=d+r$, and $\varphi$ is surjective with connected fibers. Furthermore, for every integer $m\ge2$,
\[
R^j\varphi_*\OO_W\bigl(m(K_W+\Delta_W)\bigr)
\simeq(p^*\GG_m)^{\oplus\binom rj}\qquad(0\le j\le r).
\]
None of these sheaves admits a Chen--Jiang decomposition, and the higher direct images vanish for $j>r$.
\end{theorem}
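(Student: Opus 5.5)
The argument follows Theorem \ref{thm:base}, with the birational morphism $\mu$ replacing $h$.

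\emph{Geometric claims.} Since $C$ and $T$ are smooth, $W$ is smooth. The divisor $\Delta_W$ is the pullback of the reduced SNC divisor $\Delta'$ under a smooth projection, so it is again reduced SNC. Dimensions add: $\dim W=2+(d-2)+r=d+r$. The morphism $\varphi$ factors as
\[
W\xrightarrow{\ \rho\ }X'\times C\xrightarrow{\ \mu\times\mathrm{id}_C\ }A_0\times C,
\]
where $\rho$ is the projection with fibers $T$ and $\mu\times\mathrm{id}_C$ is birational. Both maps are surjective with connected fibers, hence so is $\varphi$.

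\emph{Direct image computation.} Because $K_C\sim0$ and $K_T\sim0$, we have
\[
\OO_W\bigl(m(K_W+\Delta_W)\bigr)\simeq\rho^*s^*\OO_{X'}\bigl(m(K_{X'}+\Delta')\bigr),
\]
where $s:X'\times C\to X'$ is the projection. Write $M_m=\OO_{X'}(m(K_{X'}+\Delta'))$. The key input is that $R^q\mu_*M_m=0$ for $q>0$. By the proof of Theorem \ref{thm:pluri}, $M_m\simeq\mu^*L^m(-mR)$, so by the projection formula it suffices to show $R^1\mu_*\OO_{X'}(-mR)=0$. Apply the theorem on formal functions, using
\[
\OO_{X'}(-mR)\otimes\OO_{X'}(-kR)/\OO_{X'}(-(k+1)R)\simeq\OO_R(m+k),
\]
since $\OO_R(-R)=\OO_{\PP^1}(1)$. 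Each of these sheaves has vanishing $H^1$ on $R\simeq\PP^1$, which gives the vanishing. Then flat base change along $p$ gives
\[
R^i(\mu\times\mathrm{id}_C)_*s^*M_m\simeq p^*R^i\mu_*M_m,
\]
which equals $p^*\GG_m$ for $i=0$ and $0$ for $i>0$. Next, $R^q\rho_*\rho^*(\cdot)\simeq(\cdot)\otimes H^q(T,\OO_T)$. The Leray spectral sequence therefore degenerates and yields
\[
R^j\varphi_*\simeq p^*\GG_m\otimes H^j(T,\OO_T)\simeq(p^*\GG_m)^{\oplus\binom rj}.
\]
This also gives vanishing for $j>r$.

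\emph{Nonexistence of a decomposition.} This is the main obstacle. Set $G=(p^*\GG_m)^{\oplus N}$ with $N=\binom rj>0$. I would use two obstructions.

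\begin{enumerate}[label=(\alph*),leftmargin=2em]
\item \textbf{$G$ is not $M$-regular.} By Künneth on $A_0\times C$,
\[
H^1\bigl(\GG_m\otimes\pr_1^*\beta\boxtimes\OO_C\bigr)\supseteq H^1(A_0,\GG_m\otimes\pr_1^*\beta)\otimes H^0(C,\OO_C)\neq0,
\]
by the proof of Theorem \ref{thm:pluri}. So $V^1(B_d,G)$ contains $\pr_1^*\Pic^0(E)\times\{\OO_C\}$, which has codimension $1+(d-2)=d-1$ in $\Pic^0(B_d)$, and $d-1\le1$ fails to exceed $1$ only when $d=2$. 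For $d>2$ the inequality needed for $M$-regularity does hold for this particular component, so this obstruction alone does not rule out $M$-regularity in general. Instead I would argue locally.

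\item \textbf{$G$ is not locally free along $\{p\}\times C$.} Suppose $G\simeq\bigoplus_i\alpha_i\otimes q_i^*\mathcal M_i$ is a Chen--Jiang decomposition, where $q_i:B_d\to B_i$. By the Remark after Theorem \ref{thm:pluri}, $\GG_m$ is not locally free at $p$, so $G$ is not locally free at the points of $\{p\}\times C$. Restrict to a general slice $A_0\times\{c\}$ and use that $G$ is torsion-free of rank $N$. Any summand with $q_i$ not finite on $A_0\times\{c\}$ is pulled back along a quotient whose fibers meet $A_0\times\{c\}$ in positive-dimensional translates. Such pullbacks are locally free or trivial in those directions, so they cannot reproduce the nonreflexive stalk $\II_p^m$ at isolated points. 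If all summands are isomorphic along $A_0$, then $G$ restricted to the slice is a sum of rank-one $M$-regular-type pieces.
\end{enumerate}

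The cleanest route is to restrict to the slice $A_0\times\{0\}$ and reduce to the argument of Theorem \ref{thm:pluri}, extended to $N$ summands. Restriction of a Chen--Jiang decomposition to an abelian subvariety is again of the same shape, with $M$-regularity replaced by GV plus torsion-freeness. Combining $V^1\supseteq\pr_1^*\Pic^0(E)$ for the summands that are isomorphic on the slice with the non-local-freeness for the others then gives a contradiction. The delicate point is justifying this restriction step, and I expect it to be the hardest part.
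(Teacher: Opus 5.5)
Your geometric claims and the direct-image computation are correct. Your theorem-on-formal-functions argument for $R^1\mu_*\OO_{X'}(-mR)=0$, using filtration quotients $\OO_R(m+k)$ on $R\simeq\PP^1$, is a valid substitute for the paper's induction on $0\to\OO_{X'}(-(k+1)R)\to\OO_{X'}(-kR)\to\OO_R(k)\to0$.

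The genuine gap is the nonexistence of a Chen--Jiang decomposition, which the proposal does not prove. Obstruction (a) is not an obstruction, because a sheaf with a Chen--Jiang decomposition need not be $M$-regular. For $d\ge3$, any $p^*\mathcal M$ with $\mathcal M$ $M$-regular on $A_0$ fails $M$-regularity on $B_d$, yet it trivially decomposes. Obstruction (b) can at best force the non-locally-free summands to have quotient maps whose kernels lie in $\{0\}\times C$. It cannot exclude $q_i=p$ with $\mathcal M_i$ an $M$-regular sheaf that is not locally free at $p$. For example, $L^{m+1}\otimes\II_p^m=\mathcal H_{m+1,m+1,m}$ from Remark \ref{rem:plurisubsheaf} is $\mathrm{IT}^0$ and locally isomorphic to $\GG_m$, and its pullback by $p$ has a one-term decomposition. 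So local data cannot separate $p^*\GG_m$ from such sheaves; only the cohomological failure of $M$-regularity of $\GG_m$ on $A_0$ does. Your restriction route discards exactly this information. Once ``$M$-regular'' is weakened to ``GV and torsion-free'', $\GG_m$ itself is a decomposition of that shape, so no contradiction with Theorem \ref{thm:pluri} can follow. The restriction claim is also unjustified when $A_0\times\{0\}\to Q_i$ is not surjective.

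The missing idea is to pin down the quotient map using $V^0$.
\begin{enumerate}[label=(\roman*),leftmargin=2em]
\item From $0\to\GG_m\otimes\alpha\to L^m\otimes\alpha\to(L^m\otimes\alpha)\otimes\OO_{A_0}/\II_p^m\to0$ you get $H^2(A_0,\GG_m\otimes\alpha)=0$ and $\chi(A_0,\GG_m\otimes\alpha)=m^2-\tfrac{m(m+1)}{2}>0$ for every $\alpha$. Hence $V^0(A_0,\GG_m)=\Pic^0(A_0)$, and by K\"unneth $V^0(B_d,p^*\GG_m)=p^*\Pic^0(A_0)$.
\item Since $p^*\GG_m$ is torsion-free of rank one, a decomposition has a single term $\tau\otimes q^*\mathcal M$. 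Then \cite[Lemma 3.3]{LPS} gives $p^*\Pic^0(A_0)=\tau^{-1}\otimes q^*\Pic^0(Q)$.
\item This forces $q^*\Pic^0(Q)=p^*\Pic^0(A_0)$ and $\tau\in p^*\Pic^0(A_0)$. So $q=p$ after identifying $Q$ with $A_0$, and $\tau=p^*\tau_0$ with $\tau_0$ torsion.
\item Applying $p_*$ and using $p_*\OO_{B_d}=\OO_{A_0}$ gives $\GG_m\simeq\tau_0\otimes\mathcal M$. This would be $M$-regular, contradicting Theorem \ref{thm:pluri}, and the argument works for every $d\ge2$, not only $d=2$.
\end{enumerate}
For $\binom rj>1$ copies, reduce to a single copy via \cite[Proposition 3.6]{LPS}: a direct summand of a sheaf with a Chen--Jiang decomposition again has one. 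This replaces the unexplained ``extended to $N$ summands''.
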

\begin{proof}
Put $M_m=\OO_{X'}(m(K_{X'}+\Delta'))$. We first show that
\begin{equation}\label{eq:blowupall}
\mu_*M_m\simeq\GG_m,\qquad R^q\mu_*M_m=0\quad(q>0).
\end{equation}
The first equality is Theorem \ref{thm:pluri}. For the vanishing, let $R$ be the exceptional curve. Since $M_m\simeq\mu^*L^m\otimes\OO_{X'}(-mR)$, it is enough to prove $R^1\mu_*\OO_{X'}(-kR)=0$ for $k\ge0$. The case $k=0$ is the usual vanishing for the blow-up of a smooth point. For $k\ge0$, push forward
\[
0\longrightarrow\OO_{X'}(-(k+1)R)\longrightarrow\OO_{X'}(-kR)
\longrightarrow\OO_R(k)\longrightarrow0.
\]
The map
\[
\II_p^k\longrightarrow\mu_*\OO_R(k)
\simeq\II_p^k/\II_p^{k+1}
\]
is surjective. The long exact sequence therefore proves the vanishing by induction. Degrees $q\ge2$ vanish because the fibers of $\mu$ have dimension at most one. This proves \eqref{eq:blowupall}.

Factor $\varphi$ as
\[
W=X'\times C\times T\xrightarrow{\ \rho\ }X'\times C
\xrightarrow{\ \nu=\mu\times\mathrm{id}_C\ }A_0\times C,
\]
and let $s:X'\times C\to X'$ be the projection. Since $K_C\sim K_T\sim0$,
\[
\OO_W\bigl(m(K_W+\Delta_W)\bigr)\simeq\rho^*s^*M_m.
\]
The product formula gives
\[
R^j\rho_*(\rho^*s^*M_m)\simeq s^*M_m\otimes H^j(T,\OO_T).
\]
By flat base change and \eqref{eq:blowupall},
\[
\nu_*s^*M_m\simeq p^*\GG_m,\qquad R^q\nu_*s^*M_m=0\quad(q>0).
\]
Thus the Leray spectral sequence gives
\[
R^j\varphi_*\OO_W\bigl(m(K_W+\Delta_W)\bigr)
\simeq p^*\GG_m\otimes H^j(T,\OO_T).
\]
Since $h^j(T,\OO_T)=\binom rj$ for $0\le j\le r$ and is zero for $j>r$, this is the asserted formula.

We next show that $H:=p^*\GG_m$ has no Chen--Jiang decomposition. For every $\alpha\in\Pic^0(A_0)$, the exact sequence
\[
0\longrightarrow\GG_m\otimes\alpha\longrightarrow L^m\otimes\alpha
\longrightarrow(L^m\otimes\alpha)\otimes(\OO_{A_0}/\II_p^m)\longrightarrow0
\]
gives $H^2(A_0,\GG_m\otimes\alpha)=0$ and
\[
\chi(A_0,\GG_m\otimes\alpha)
=m^2-\frac{m(m+1)}2=\frac{m(m-1)}2>0.
\]
Here $\length(\OO_{A_0,p}/\II_p^m)=m(m+1)/2$, by counting monomials of total degree less than $m$. It follows that $h^0(A_0,\GG_m\otimes\alpha)>0$ for every $\alpha$. K\"unneth therefore gives
\[
V^0(B_d,H)=\Pic^0(A_0)\times\{\OO_C\}=p^*\Pic^0(A_0).
\]
Since $H$ is torsion-free of rank one, a Chen--Jiang decomposition would have the form
\[
H\simeq\tau\otimes q^*\mathcal M,
\]
where $q:B_d\to Q$ has connected fibers, $\tau$ is torsion, and $\mathcal M$ is $M$-regular. By \cite[Lemma 3.3]{LPS},
\[
p^*\Pic^0(A_0)=\tau^{-1}\otimes q^*\Pic^0(Q).
\]
Both pullback images are abelian subvarieties, and the left side contains the origin. Hence
\[
q^*\Pic^0(Q)=p^*\Pic^0(A_0),\qquad \tau\in p^*\Pic^0(A_0).
\]
The quotients $p$ and $q$ consequently have the same kernel. Identifying $Q$ with $A_0$, we may write $q=p$ and $\tau=p^*\tau_0$. Applying $p_*$ to
\[
p^*\GG_m\simeq p^*(\tau_0\otimes\mathcal M)
\]
gives $\GG_m\simeq\tau_0\otimes\mathcal M$, because $p_*\OO_{B_d}=\OO_{A_0}$. This contradicts the failure of $M$-regularity proved in Theorem \ref{thm:pluri}. Finally, if a nonzero direct sum of copies of $H$ admitted a Chen--Jiang decomposition, then so would its direct summand $H$, by \cite[Proposition 3.6]{LPS}.
\end{proof}

\bigskip
\noindent\textsc{Department of Mathematical and Statistical Sciences, University of Alberta,\\
University Commons 5-140, Edmonton, Alberta, Canada T6G 2N8.}\\
\textit{Email:} \href{mailto:hbenamma@ualberta.ca}{hbenamma@ualberta.ca}
\end{document}